\documentclass[12pt,  reqno]{amsart}

\usepackage{amsthm,amsmath}

\setlength{\parindent}{0.3cm} \setlength{\parskip}{0.8ex}

\newtheorem{theorem}{Theorem}[section]
\newtheorem{lemma}{Lemma}[section]
\newtheorem{proposition}{Proposition}[section]

\newtheorem{corollary}{Corollary}[section]

\newtheorem{remark}{Remark}

\numberwithin{equation}{section}
\theoremstyle{definition}
\theoremstyle{remark}

\begin{document}

\title{On an inequality of Andrews, De Lellis and Topping}
\author{Kwok-Kun Kwong}

\address{~School of Mathematical Sciences, Monash University, Victoria 3800, Australia.} \email{kwok-kun.kwong@monash.edu}

\thanks{Research partially supported by Australian Research Council Discovery Grant \#DP0987650 }

\renewcommand{\subjclassname}{\textup{2010} Mathematics Subject Classification}
\subjclass[2000]{53C21, 53C24}
\date{}

\keywords{}

\begin{abstract}
Using the method of De Lellis-Topping \cite{lellis2012almost}, we prove some almost Schur type results. For example, one of our results gives a quantitative measure of how close the higher mean curvature of a submanifold is to its average value. We also derive another sharp Andrews-De Lellis-Topping type inequality involving the Riemannian curvature tensor and discuss its equality case.
\end{abstract}
\maketitle\markboth{}{}

\section{Introduction}
It is a classical result of Schur that if the Ricci curvature satisfies $Ric=\frac Rn g$ on a Riemannian manifold $(M^n,g)$, then the scalar curvature $R$ must be constant for $n\ge 3$. This is a simple consequence of the twice-contracted Bianchi identity $\mathrm{div}(Ric-\frac R2g)=0$. It is interesting to see how $R$ differs from a constant if $Ric-\frac Rng$ is close to zero. In this direction De Lellis and Topping proved in \cite{lellis2012almost} (and independently by Andrews, cf. \cite{chow2006hamilton} Corollary B.20) that
\begin{theorem}\label{thm: 1}
  If $(M^n,g)$ is a closed oriented Riemannian manifold $(n\ge 3)$ with nonnegative Ricci curvature, then
  $$\int_M (R-\overline R)^2 \le \frac{4n(n-1)}{(n-2)^2}\int_M |Ric - \frac Rng|^2.$$
  Here $\overline R$ is the average of $R$ on $M$. The equality holds if and only if $(M, g)$ is Einstein.
\end{theorem}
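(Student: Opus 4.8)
The plan is to adapt the Bochner--Reilly argument of De Lellis--Topping: introduce an auxiliary potential through a Poisson equation, integrate by parts using the twice-contracted Bianchi identity, apply Cauchy--Schwarz to isolate $\int_M|Ric-\frac Rng|^2$, and finally bound the remaining Hessian term by the Bochner formula, which is the only place the hypothesis $Ric\ge 0$ is used.

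First, since $M$ is closed and $\int_M(R-\overline R)=0$, we may solve $\Delta f=R-\overline R$ for some $f\in C^\infty(M)$. Set $E:=Ric-\frac Rng$, so that $\operatorname{tr}_g E=0$; the twice-contracted second Bianchi identity $\operatorname{div}Ric=\frac12\nabla R$ then gives $\operatorname{div}E=\frac{n-2}{2n}\nabla R$. Integrating by parts twice and using these two facts,
\[
\int_M(R-\overline R)^2=\int_M(R-\overline R)\,\Delta f=-\int_M\langle\nabla R,\nabla f\rangle=-\frac{2n}{n-2}\int_M\langle\operatorname{div}E,\nabla f\rangle=\frac{2n}{n-2}\int_M\langle E,\nabla^2 f\rangle.
\]
Since $E$ is trace-free, $\langle E,\nabla^2 f\rangle=\langle E,\mathring{\nabla^2 f}\rangle$ where $\mathring{\nabla^2 f}:=\nabla^2 f-\frac{\Delta f}{n}g$, so Cauchy--Schwarz yields
\[
\int_M(R-\overline R)^2\le\frac{2n}{n-2}\Big(\int_M|E|^2\Big)^{1/2}\Big(\int_M|\mathring{\nabla^2 f}|^2\Big)^{1/2}.
\]

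Next, integrating the Bochner formula $\frac12\Delta|\nabla f|^2=|\nabla^2 f|^2+\langle\nabla\Delta f,\nabla f\rangle+Ric(\nabla f,\nabla f)$ over $M$ and using $\int_M\langle\nabla\Delta f,\nabla f\rangle=-\int_M(\Delta f)^2$ gives $\int_M|\nabla^2 f|^2=\int_M(\Delta f)^2-\int_M Ric(\nabla f,\nabla f)$. Since $Ric\ge 0$ the last integral is nonnegative, whence
\[
\int_M|\mathring{\nabla^2 f}|^2=\int_M|\nabla^2 f|^2-\frac1n\int_M(\Delta f)^2\le\frac{n-1}{n}\int_M(\Delta f)^2=\frac{n-1}{n}\int_M(R-\overline R)^2.
\]
Feeding this into the previous inequality and dividing by $\big(\int_M(R-\overline R)^2\big)^{1/2}$ (the claim being trivial if $R\equiv\overline R$) gives $\int_M(R-\overline R)^2\le\frac{4n(n-1)}{(n-2)^2}\int_M|E|^2$, as desired.

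Finally, for the equality case: if $g$ is Einstein then $R\equiv\overline R$ and both sides vanish. Conversely, assume equality. If $R\equiv\overline R$ the right-hand side also vanishes and the sharp constant forces $\int_M|E|^2=0$, i.e.\ $g$ is Einstein. If $R$ is non-constant, then equality in the Cauchy--Schwarz step forces $\mathring{\nabla^2 f}=\lambda E$ for a constant $\lambda>0$, while equality in the Bochner step forces $\int_M Ric(\nabla f,\nabla f)=0$, hence $Ric(\nabla f,\cdot)\equiv0$ by nonnegativity of $Ric$. Then $\nabla^2 f=\lambda\,Ric+\varphi\,g$ with $\varphi=\frac{(1-\lambda)R-\overline R}{n}$; pairing with $\nabla f$ gives $\nabla|\nabla f|^2=2\varphi\nabla f$, so on $\{\nabla f\ne 0\}$ the quantities $|\nabla f|$, $\varphi$ and $R$ are functions of $f$ alone. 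I expect extracting a contradiction from this rigid structure on a closed manifold to be the main obstacle; I would argue it by evaluating the Hessian equation at the extremal points of $f$ and propagating along its gradient flow to force $R$ to be constant after all, leaving only the Einstein case.
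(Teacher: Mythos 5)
Your proof of the inequality itself is correct and follows precisely the De Lellis--Topping scheme that the paper reproduces (for the Newton tensors $T^r$) in the proof of Theorem \ref{thm: main}: solve $\Delta f=R-\overline R$, integrate by parts against the divergence-free trace-modified tensor via the contracted Bianchi identity, apply Cauchy--Schwarz against the trace-free Hessian, and bound $\int_M|\nabla^2f-\frac{\Delta f}{n}g|^2$ using the Bochner formula, which is the only place $Ric\ge 0$ enters. All constants check out. (Note the paper does not itself prove Theorem \ref{thm: 1}; it quotes it from \cite{lellis2012almost}, and for the rigidity in its own generalization, Theorem \ref{thm: R}(i), it defers to \cite{cheng2012generalization}.)

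The genuine gap is the equality case, and you flag it yourself. After reducing to: $R$ non-constant, $Ric(\nabla f,\cdot)\equiv 0$, and $\nabla^2f-\frac{\Delta f}{n}g=\lambda(Ric-\frac Rn g)$, your plan to ``evaluate at extremal points and propagate along the gradient flow'' is a hope, not an argument. Worse, the obvious next step is genuinely obstructed: combining the equality conditions forces $\lambda=\frac{2(n-1)}{n-2}$, and for exactly this value the divergence identity is vacuous --- using $\operatorname{div}(\nabla^2f)=\nabla\Delta f+Ric(\nabla f,\cdot)=\nabla R$ one gets $\operatorname{div}(\nabla^2f-\frac{\Delta f}{n}g)=\frac{n-1}{n}\nabla R$, which equals $\operatorname{div}(\lambda(Ric-\frac Rn g))=\lambda\frac{n-2}{2n}\nabla R$ identically, yielding no contradiction. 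The rigidity direction in \cite{lellis2012almost} requires a further, nontrivial analysis of this overdetermined system on a closed manifold; correspondingly, the present paper only settles the analogous equality cases in Section \ref{sec: equality} under extra hypotheses (e.g.\ strict positivity of $Ric+(n-1)Kg$ at a point) or by citation. As written, the ``equality $\Rightarrow$ Einstein'' half of the statement is unproven.
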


These types of stability and rigidity results have attracted a lot of attention in the last decade. In particular
the work of De Lellis-M{\"u}ller \cite{de2005optimal} started this new research field within the field of geometric analysis (we would like to thank the reviewer for pointing out this reference). In this paper, we will show an analogous result for the higher $r$-th mean curvature for closed submanifolds in space forms (Theorem \ref{thm: main}). In particular we will show that our result implies Theorem \ref{thm: 1} and some other results in \cite{CZ} and  \cite{C}. We will also show another version of this type of result which involves the Riemannian curvature tensor. More precisely, we prove the following Andrews-De Lellis-Topping type inequality:

\begin{theorem}\label{thm: 2}(Theorem \ref{thm: main}) 
Let $\Sigma^n$ ($n\ge 2$) be a closed immersed oriented submanifold in a space form $N^m$, $m>n$. Let $r\in \lbrace 1,\cdots, n-1\rbrace$. Assume that either
\begin{enumerate}
  \item $r$ is even, or
  \item  $r$ is odd and $N=\mathbb R^m$, or
  \item $\Sigma$ is of codimension one, i.e. a hypersurface.
\end{enumerate}
 Let $\overline H_r=\frac 1{ \mathrm{Area}(\Sigma)}\int_\Sigma H_r$ be the average of $H_r$  and $\stackrel{\circ}{T^{r}}= T^{r}- \frac{(n-r)}n H_r I$ be the traceless part of $T^{r}$. Let $\lambda$ be the first eigenvalue for the Laplacian on $\Sigma$ and suppose the Ricci curvature of $\Sigma $ is bounded from below by $-(n-1)K$, $K\ge 0$, then for $r=1,\cdots, n-1$, we have
\begin{equation*}
\int_\Sigma |H_r-\overline H_r|^2 \le \frac{ n(n-1)}{(n-r)^2} (1+\frac{nK}\lambda)\int_\Sigma |\stackrel \circ {T^{r}}|^2.
\end{equation*}
\end{theorem}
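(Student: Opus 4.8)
\emph{Proof sketch.} The plan is to run the De Lellis--Topping scheme of \cite{lellis2012almost}, the role of the twice-contracted Bianchi identity being played by the fact that, under each of the hypotheses (1)--(3),
\[
\mathrm{div}(T^{r})=0 \qquad\text{on }\Sigma .
\]
A priori $\mathrm{div}(T^{r})$ is a contraction of the ambient curvature of $N$ against the second fundamental form and the lower Newton tensors; in an ambient space form this collapses, and it vanishes identically exactly in the three listed cases --- for even $r$, $T^{r}$ is, via the Gauss equation, an intrinsic Lovelock/Lanczos-type tensor and is divergence free by the second Bianchi identity; for $N=\mathbb R^{m}$ there is no ambient-curvature term; and in codimension one the Codazzi equation kills the remaining contraction. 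Verifying this identity (and, in higher codimension, pinning down the correct $T^{r}$) is the only place the three hypotheses enter, and I expect it to be the main obstacle; the rest uses only that $T^{r}$ is symmetric with $\mathrm{tr}\,T^{r}=(n-r)H_{r}$ and $\mathrm{div}\,T^{r}=0$.

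\emph{The core identity.} Since $\Sigma$ is closed and connected and $\int_{\Sigma}(H_{r}-\overline H_{r})=0$, choose $f\in C^{\infty}(\Sigma)$ with $\Delta f=H_{r}-\overline H_{r}$ and $\int_{\Sigma}f=0$. Put $W^{r}:=H_{r}I-T^{r}$: a symmetric $2$-tensor with $\mathrm{tr}\,W^{r}=rH_{r}$, trace-free part $W^{r}-\tfrac{r}{n}H_{r}I=-\stackrel{\circ}{T^{r}}$, and $\mathrm{div}\,W^{r}=\nabla H_{r}-\mathrm{div}\,T^{r}=\nabla H_{r}$. Two integrations by parts give
\[
\int_{\Sigma}(H_{r}-\overline H_{r})^{2}=\int_{\Sigma}(H_{r}-\overline H_{r})\Delta f=-\int_{\Sigma}\langle\nabla H_{r},\nabla f\rangle=-\int_{\Sigma}\langle\mathrm{div}\,W^{r},\nabla f\rangle=\int_{\Sigma}\langle W^{r},\nabla^{2}f\rangle .
\]
Splitting $W^{r}$ and $\nabla^{2}f$ into trace and trace-free parts and using $\int_{\Sigma}H_{r}\Delta f=\int_{\Sigma}(H_{r}-\overline H_{r})^{2}$, the trace part contributes $\tfrac{r}{n}\int_{\Sigma}(H_{r}-\overline H_{r})^{2}$, so that
\[
\tfrac{n-r}{n}\int_{\Sigma}(H_{r}-\overline H_{r})^{2}=-\int_{\Sigma}\Big\langle\stackrel{\circ}{T^{r}},\ \nabla^{2}f-\tfrac{\Delta f}{n}I\Big\rangle\le\Big(\int_{\Sigma}|\stackrel{\circ}{T^{r}}|^{2}\Big)^{1/2}\Big(\int_{\Sigma}\big|\nabla^{2}f-\tfrac{\Delta f}{n}I\big|^{2}\Big)^{1/2}
\]
by Cauchy--Schwarz.

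\emph{Bochner, Poincar\'e, and conclusion.} Integrating the Bochner formula over $\Sigma$ gives $\int_{\Sigma}|\nabla^{2}f|^{2}=\int_{\Sigma}(\Delta f)^{2}-\int_{\Sigma}Ric(\nabla f,\nabla f)$. Using $Ric\ge-(n-1)K$, together with the spectral estimate $\int_{\Sigma}|\nabla f|^{2}=-\int_{\Sigma}f\Delta f\le\lambda^{-1/2}(\int_{\Sigma}|\nabla f|^{2})^{1/2}(\int_{\Sigma}(\Delta f)^{2})^{1/2}$ (valid because $\int_{\Sigma}f=0$), i.e. $\int_{\Sigma}|\nabla f|^{2}\le\lambda^{-1}\int_{\Sigma}(\Delta f)^{2}$, one gets $\int_{\Sigma}|\nabla^{2}f|^{2}\le(1+\tfrac{(n-1)K}{\lambda})\int_{\Sigma}(H_{r}-\overline H_{r})^{2}$; subtracting the trace and using $1-\tfrac1n+\tfrac{(n-1)K}{\lambda}=\tfrac{n-1}{n}(1+\tfrac{nK}{\lambda})$,
\[
\int_{\Sigma}\big|\nabla^{2}f-\tfrac{\Delta f}{n}I\big|^{2}\le\tfrac{n-1}{n}\big(1+\tfrac{nK}{\lambda}\big)\int_{\Sigma}(H_{r}-\overline H_{r})^{2}.
\]
Feeding this into the previous display and writing $X^{2}=\int_{\Sigma}(H_{r}-\overline H_{r})^{2}$ (the case $X=0$ being trivial), we obtain $\tfrac{n-r}{n}X^{2}\le(\int_{\Sigma}|\stackrel{\circ}{T^{r}}|^{2})^{1/2}[\tfrac{n-1}{n}(1+\tfrac{nK}{\lambda})]^{1/2}X$; dividing by $X$ and squaring yields $X^{2}\le\tfrac{n(n-1)}{(n-r)^{2}}(1+\tfrac{nK}{\lambda})\int_{\Sigma}|\stackrel{\circ}{T^{r}}|^{2}$, which is the claim. (Tracking the equality cases in Cauchy--Schwarz, in the Bochner step, and in the Poincar\'e inequality would give the rigidity statement referred to in the abstract.)
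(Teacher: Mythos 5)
Your argument is correct and is essentially the paper's own proof: the same auxiliary Poisson equation $\Delta f=H_r-\overline H_r$ with $\int_\Sigma f=0$, the same integration by parts against $\nabla^2f-\tfrac{\Delta f}{n}I$ using $\mathrm{div}(T^r)=0$ and $\mathrm{tr}(T^r)=(n-r)H_r$, and the same Bochner plus Poincar\'e estimate; your tensor $W^r=H_rI-T^r$ is only a cosmetic repackaging of the paper's $\stackrel{\circ}{T^r}$, and all your constants match. The one ingredient you defer, $\mathrm{div}(T^r)=0$, is exactly the paper's Lemma \ref{lem: div}, proved by combining the Codazzi equation $\nabla_iA_{jk}=\nabla_jA_{ik}$ (valid for any submanifold of a space form, in any codimension) with the antisymmetry of the generalized Kronecker symbol. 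Note, however, that your diagnosis of where hypotheses (1)--(3) enter is off: the divergence identity holds in all three cases, and in arbitrary codimension, for the same reason; the hypotheses are really there so that $H_r$ and $T^r$ take values in a \emph{fixed} vector space (scalar when $r$ is even or in codimension one, $\mathbb R^m$-valued when $r$ is odd and $N=\mathbb R^m$), which is what makes $\overline H_r$ and the equation $\Delta F=H_r-\overline H_r$ meaningful. In the odd higher-codimension case you must accordingly take $F$ to be $\mathbb R^m$-valued and run your computation componentwise, exactly as the paper does.
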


Here $H_r$ is the $r$-th mean curvature and $T^r$ is the $r$-th Newton transformation of the second fundamental form and will be defined in Section \ref{sec: submfd}. In particular, if $r=1$, then $-\stackrel \circ {T^1}= \stackrel\circ A$, the traceless second fundamental form. In particular, this recovers the classical result that if $\Sigma$ is a closed embedded totally umbilic hypersurface in $\mathbb R^{n+1}$, $\mathbb H^{n+1}$ or the hemisphere $\mathbb S^{n+1}_+$, then $H$ is constant and thus is a distance sphere by \cite{montiel1991compact}.

In Section \ref{sec: other}, we will also prove:
\begin{theorem}\label{thm: 3}(Theorem \ref{thm: R})
Suppose $(M^n,g)$ ($n\ge 3$) is a closed oriented Riemannian manifold such that its Ricci curvature is bounded from below by $-K$, $K\ge 0$, then we have
\begin{equation*}
\begin{split}
\int _M (R-\overline R) ^2&\stackrel{\mathrm{(i)}}\le \frac {4n(n-1) } {(n-2)^2}(1+\frac{nK}\lambda) \int_M |Ric-\frac Rn g|^2 \\
&\stackrel{\mathrm{(ii)}}\le \frac {n(n-1) } {n-2} (1+\frac{nK}\lambda)\int_M |Rm-\frac {R }{n(n-1)} B|^2,
\end{split}
\end{equation*}
where $\overline R$ is the average of its scalar curvature $R$, $\lambda$ is the first eigenvalue for the Laplacian on $M$  and $B_{ijkl}=g_{ik}g_{jl}-g_{il}g_{jk}$ is the curvature tensor with curvature $1$. The equality sign in (i) holds if and only if $(M,g)$ is Einstein. If $n\ge 4$, then the equality sign in (ii) holds if and only if $(M,g)$ is locally conformally flat. Both (i) and (ii) become equalities if and only if $(M,g)$ has constant curvature.

\end{theorem}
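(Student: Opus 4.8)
The plan is to prove (i) by the De Lellis--Topping scheme and then deduce (ii) from a pointwise curvature identity. For (i): since $\int_M(R-\overline R)=0$, let $f$ be the smooth solution of $\Delta f=R-\overline R$ normalized by $\int_M f=0$. The twice-contracted second Bianchi identity gives $\mathrm{div}\big(Ric-\tfrac Rn g\big)=\tfrac{n-2}{2n}\nabla R$, so integrating by parts twice and using that $Ric-\tfrac Rn g$ is trace-free,
\begin{equation*}
\frac{n-2}{2n}\int_M(R-\overline R)^2=\int_M\big\langle Ric-\tfrac Rn g,\ \nabla^2 f\big\rangle=\int_M\big\langle Ric-\tfrac Rn g,\ \nabla^2 f-\tfrac{\Delta f}{n}g\big\rangle .
\end{equation*}
By Cauchy--Schwarz it then suffices to bound the $L^2$-norm of the trace-free Hessian $\nabla^2 f-\tfrac{\Delta f}{n}g$ by a constant multiple of $\|R-\overline R\|_{L^2}$.

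Next I would integrate Bochner's formula $\tfrac12\Delta|\nabla f|^2=|\nabla^2 f|^2+\langle\nabla f,\nabla\Delta f\rangle+Ric(\nabla f,\nabla f)$ over the closed manifold to get $\int_M|\nabla^2 f|^2=\int_M(\Delta f)^2-\int_M Ric(\nabla f,\nabla f)$, hence $\int_M|\nabla^2 f-\tfrac{\Delta f}{n}g|^2=\tfrac{n-1}{n}\int_M(\Delta f)^2-\int_M Ric(\nabla f,\nabla f)$. The Ricci lower bound yields $-\int_M Ric(\nabla f,\nabla f)\le K\int_M|\nabla f|^2$, and combining $\int_M|\nabla f|^2=-\int_M f\,\Delta f\le\|f\|_{L^2}\|\Delta f\|_{L^2}$ with the mean-zero Poincar\'e inequality $\lambda\int_M f^2\le\int_M|\nabla f|^2$ gives $\int_M|\nabla f|^2\le\tfrac1\lambda\int_M(\Delta f)^2$. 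Replacing $K$ by the larger $(n-1)K$ in the last estimate (which is exactly what produces the clean factor $1+\tfrac{nK}\lambda$ instead of the sharper $1+\tfrac{nK}{(n-1)\lambda}$) one arrives at $\int_M|\nabla^2 f-\tfrac{\Delta f}{n}g|^2\le\tfrac{n-1}{n}\big(1+\tfrac{nK}\lambda\big)\int_M(R-\overline R)^2$; substituting back into the displayed identity, applying Cauchy--Schwarz, and cancelling one factor of $\|R-\overline R\|_{L^2}$ yields (i) with the stated constant.

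Part (ii) is algebraic. The $O(n)$-irreducible decomposition of $Rm$ into its Weyl part $W$, a trace-free-Ricci part, and a scalar part is pointwise orthogonal, the scalar part being precisely $\tfrac{R}{n(n-1)}B$; a short computation (using the norm identity for the Kulkarni--Nomizu product of a trace-free symmetric $2$-tensor with the metric) shows that the trace-free-Ricci part of $Rm-\tfrac{R}{n(n-1)}B$ has squared norm $\tfrac{4}{n-2}\big|Ric-\tfrac Rn g\big|^2$. Hence
\begin{equation*}
\Big|Rm-\tfrac{R}{n(n-1)}B\Big|^2=|W|^2+\tfrac{4}{n-2}\Big|Ric-\tfrac Rn g\Big|^2\ \ge\ \tfrac{4}{n-2}\Big|Ric-\tfrac Rn g\Big|^2,
\end{equation*}
and integrating this and combining with (i) gives (ii).

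Finally the equality cases. When $K>0$ the constant in (i) is deliberately non-sharp, so equality there forces $R\equiv\overline R$ and then also $\int_M|Ric-\tfrac Rn g|^2=0$, i.e.\ $(M,g)$ is Einstein; when $K=0$, statement (i) is exactly Theorem~\ref{thm: 1}, whose equality case is again the Einstein condition. Equality in (ii) forces $\int_M|W|^2=0$, i.e.\ $W\equiv0$, which for $n\ge4$ means $(M,g)$ is locally conformally flat (and is automatic for $n=3$); and both (i) and (ii) hold with equality precisely when $(M,g)$ is Einstein with vanishing Weyl tensor, which via Schur's lemma (constancy of $R$) is equivalent to constant curvature. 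The bulk of the argument is routine; the main obstacle is the bookkeeping in this last paragraph --- in particular articulating why the non-sharpness of the constant rules out equality in (i) unless $R$ is constant when $K>0$, and correctly invoking Theorem~\ref{thm: 1} for the borderline $K=0$ case, which carries genuine rigidity content.
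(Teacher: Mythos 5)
Your proof is correct, but it is organized differently from the paper's. For part (i) the paper simply quotes the inequality and its rigidity case from Cheng's generalization of the almost-Schur lemma \cite{cheng2012generalization}, whereas you reprove it from scratch by the De Lellis--Topping scheme ($\Delta f=R-\overline R$, contracted Bianchi, Bochner, Poincar\'e); your computation of the constant $\frac{4n(n-1)}{(n-2)^2}(1+\frac{nK}{\lambda})$ checks out, so your version is self-contained where the paper's is not. For part (ii) both arguments rest on the orthogonal decomposition $Rm=\frac{R}{2n(n-1)}g\odot g+\frac{1}{n-2}g\odot\stackrel\circ{Ric}+W$ and the norm identity $|g\odot\stackrel\circ{Ric}|^2=4(n-2)|\stackrel\circ{Ric}|^2$; you read off the pointwise Pythagorean identity $|Rm-\frac{R}{n(n-1)}B|^2=|W|^2+\frac{4}{n-2}|\stackrel\circ{Ric}|^2$ directly, while the paper reaches the same bound through a Cauchy--Schwarz step, so your route to the equality case of (ii) (equality forces $\int_M|W|^2=0$) is if anything cleaner. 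One caveat on your treatment of equality in (i): your ``the constant is deliberately non-sharp when $K>0$, hence $R\equiv\overline R$'' argument hinges on the hypothesis being $Ric\ge -Kg$, exactly as the statement is quoted in the introduction; note that the paper's Section 5 version of the same theorem assumes $Ric\ge -(n-1)Kg$, for which the factor $1+\frac{nK}{\lambda}$ is the natural (sharp) one and your shortcut collapses --- there one genuinely needs the full rigidity analysis (simultaneous equality in Cauchy--Schwarz, in the Ricci lower bound, and in the Poincar\'e inequality), which is precisely what the citation of \cite{cheng2012generalization} supplies. For the statement as given your argument is valid, and the $K=0$ appeal to Theorem \ref{thm: 1} is legitimate since its equality case is quoted in the paper; just be aware that your equality discussion does not transfer verbatim to the $-(n-1)K$ normalization.
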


This result gives a quantitative version of another result of Schur: if $(M^n,g)$ $(n\ge 3)$ has sectional curvature which depends on its base point only, then its curvature is constant.

The rest of this paper is organized as follows. In Section \ref{sec: submfd}, we derive Theorem \ref{thm: 2} for closed oriented submanifolds (not necessarily of codimension one) in space forms. In Section \ref{sec: lovelock}, we discuss the relation between Theorem \ref{thm: 2} and a corresponding result in \cite{geproblems}, and show that indeed our result implies Theorem \ref{thm: 1} and a result in \cite{geproblems}. In Section \ref{sec: equality} we discuss the equality case of Theorem \ref{thm: main}. Finally in Section \ref{sec: other} we will prove Theorem \ref{thm: 3} and show that the constants in the inequalities are optimal.

{\sc Acknowledgments}:
The author would like to thank Gilbert Weinstein for stimulating discussions and useful comments.

\section{Higher mean curvatures of submanifolds in space forms}\label{sec: submfd}

Let $\Sigma^n$ be an immersed submanifold in a Riemannian manifold $(N^m,h)$, $n<m$.
The second fundamental form of $\Sigma $ in $N $ is defined by $A(X,Y)=-({\overline \nabla _XY})^\perp$ and is normal-valued. Here $\overline \nabla $ is the connection on $N$. We denote $A(e_i, e_j)$ by $A_{ij}$, where $\lbrace e_i\rbrace_{i=1}^n$ is a local orthonormal frame on $\Sigma$.

We define the $r$-th mean curvature as follows. If $r$ is even,
$$ H _r =  \frac 1{r!}\sum_{\substack{i_1,\cdots, i_r\\
j_1, \cdots, j_r}} \epsilon_{j_1\cdots j_r}^{i_1\cdots i_r}h(A_{i_1j_1},A_{i_2j_2})\cdots h(A_{i_{r-1}j_{r-1}},A_{i_rj_r}).$$

If $r$ is odd, the $r$-th mean curvature is a normal vector field defined by
$$ H _r =\frac 1{r!}\sum_{\substack{i_1,\cdots, i_r\\
j_1, \cdots, j_r}} \epsilon _{j_1\cdots j_r}^{i_1\cdots i_r}h(A_{i_1j_1},A_{i_2j_2})\cdots h(A_{i_{r-2}j_{r-2}},A_{i_{r-1}j_{r-1}})A_{i_rj_r}.$$
Here $\epsilon_{i_1 \cdots i_r}^{j_1\cdots j_r}$ is  zero if $i_k=i_l$ or $j_k=j_l$ for some $k\ne l$ , or if $\lbrace i_1, \cdots, i_r\rbrace \ne \lbrace j_1, \cdots, j_r\rbrace$ as sets, otherwise it is defined as the sign of the permutation $(i_1, \cdots, i_r)\mapsto (j_1, \cdots, j_r)$.

In the codimension one case, i.e. $\Sigma$ is a hypersurface, by taking the inner product with a unit normal if necessary, we can assume $H_r$ is scalar-valued. In this case the value of $H_r$ is given by
\begin{equation}\label{eq: codim 1}
H_r =\sum_{i_1<\cdots< i_r}k_{i_1}\cdots k_{i_r}
\end{equation}
where $\{k_i\}_{i=1}^n$ are the principal curvatures. This definition of $H_r$ will be used whenever $\Sigma $ is a hypersurface.

Following \cite{grosjean2002upper} and \cite{reilly1973variational}, we define the (generalized) $r$-th Newton transformation $T^r$ of $A$ (as a $(1,1)$ tensor, possibly vector-valued) as follows.\\
If $r$ is even,
$$ {(T^r)}_j^{\,i}= \frac 1 {r!}
\sum_{\substack{i_1,\cdots, i_r\\ j_1, \cdots, j_r}}
\epsilon^{i  i_1 \ldots  i_r}_{j  j_1 \ldots  j_r}
h(A_{i_1j_1},A_{i_2j_2})\cdots h(A_{i_{r-1}j_{r-1}},A_{i_rj_r}).
$$
If $r$ is odd,
$${(T^r)}_j^{\,i}= \frac 1 {r!}
\sum_{\substack{i_1,\cdots, i_r\\ j_1, \cdots, j_r}}
\epsilon^{i  i_1 \ldots  i_r}_{j  j_1 \ldots  j_r}
h(A_{i_1j_1},A_{i_2j_2})\cdots h(A_{i_{r-2}j_{r-2}},A_{i_{r-1}j_{r-1}})A_{i_rj_r}.
$$
Again, in the codimension one case, we can assume $T^r$ is an ordinary $(1,1)$ tensor and if $\lbrace e_i\rbrace_{i=1}^n$ are the eigenvectors of $A$, then
$$T^r(e_i)=\frac 1{{n\choose r}}\sum_{\substack{i_1<\cdots <i_r\\
i\ne i_l}}k_{i_1}\cdots k_{i_r}e_i.$$

This definition of $T^r$ will be used whenever $\Sigma$ is a hypersurface.

\begin{lemma}\label{lem: div}
If $\Sigma^n$ is an immersed  submanifold in a space form $N^m$, let $\mathrm{div}$ and $\mathrm{tr}$ denotes the divergence and the trace on $\Sigma$ respectively, then
  $$\mathrm{div} (T^r)=0 \textrm{\quad  and \quad}\mathrm{tr}(T^{r})= (n-r)H_r.$$
\end{lemma}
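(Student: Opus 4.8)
The plan is to prove both identities by a direct computation using the combinatorial structure of the generalized Kronecker delta $\epsilon^{i_1\cdots i_r}_{j_1\cdots j_r}$, together with the Codazzi equation, which in a space form says that $(\overline\nabla_X A)(Y,Z)$ is symmetric in $X,Y,Z$. For the trace identity $\mathrm{tr}(T^r)=(n-r)H_r$, I would simply set $i=j$ in the definition of $(T^r)^{\,i}_j$ and sum. Expanding $\epsilon^{i\,i_1\cdots i_r}_{i\,j_1\cdots j_r}$ via the Laplace expansion along the first row (or directly from the combinatorial definition), one gets $\sum_i \epsilon^{i\,i_1\cdots i_r}_{i\,j_1\cdots j_r} = (n-r)\,\epsilon^{i_1\cdots i_r}_{j_1\cdots j_r}$, because summing over the repeated index $i$ that must avoid the $r$ indices $i_1,\dots,i_r$ contributes a factor $n-r$. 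Plugging this into the definition of $T^r$ immediately produces $(n-r)$ times the defining sum for $H_r$, whether $r$ is even or odd. This step is essentially bookkeeping.

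For the divergence identity $\mathrm{div}(T^r)=0$, I would compute $\nabla_i (T^r)^{\,i}_j$ (sum over $i$) by applying the covariant derivative to the defining formula and using the Leibniz rule, so that $\nabla_i$ hits one of the $h(A_{i_a j_a}, A_{i_b j_b})$ factors (or the lone $A_{i_r j_r}$ in the odd case). Since the space form has parallel curvature — in fact constant sectional curvature — the Codazzi equation gives full symmetry of $\overline\nabla A$ in its three slots; the terms arising from the normal connection and the ambient curvature in the Codazzi identity vanish for a space form. The key point is then that the derivative index $i$ is antisymmetrized against $i_1,\dots,i_r$ by the $\epsilon$ symbol, while the Codazzi symmetry lets us symmetrize $i$ with one of the $i_a$'s; the antisymmetrization of a symmetric pair of indices is zero, so each term in the sum cancels. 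One has to be slightly careful in the vector-valued (odd $r$, or higher-codimension) cases to check that the normal-bundle connection terms genuinely drop out — this uses that the ambient space is a space form, not merely Einstein.

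The main obstacle, and the place requiring genuine care rather than routine manipulation, is the divergence identity in higher codimension: one must verify that the Codazzi equation for a submanifold of a space form still yields the needed symmetry of $\nabla^\perp A$ (the curvature term $\overline R(X,Y)Z$ has no normal component along the submanifold because the ambient curvature tensor is proportional to $B$), and one must track the normal connection $\nabla^\perp$ acting on the $A_{i_r j_r}$ factor in the odd case. A clean way to handle this uniformly is to work in a local frame adapted so that $\nabla^\perp$ vanishes at the point of evaluation, reducing everything to the scalar hypersurface computation, where the classical fact $\mathrm{div}(T^r)=0$ for Newton transformations in space forms is standard (it is exactly the divergence-free property underlying the Hsiung–Minkowski formulas and the variational characterization of $H_{r+1}$). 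I would present the hypersurface case in full and indicate the modifications for the general case.
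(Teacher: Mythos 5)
Your proposal is correct and follows essentially the same route as the paper: the trace identity via the contraction $\sum_i \epsilon^{i\,i_1\cdots i_r}_{i\,j_1\cdots j_r}=(n-r)\,\epsilon^{i_1\cdots i_r}_{j_1\cdots j_r}$, and the divergence identity by differentiating under the $\epsilon$ symbol, invoking the Codazzi equation (whose ambient curvature term has no normal component in a space form) to symmetrize the derivative index against an antisymmetrized slot, so the sum equals its own negative. The paper carries out exactly this cancellation explicitly for the odd case (illustrated with $r=3$) and defers the even case to Grosjean, so no further changes are needed.
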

\begin{proof}
  The first assertion follows from the proof of \cite{grosjean2002upper} Lemma 2.1. But since it has only been shown in the case where $r$ is even in that paper, let us assume the odd case, and for the sake of demonstration let $r=3$. Using local orthonormal frame, the first assertion follows from
\begin{equation*}
  \begin{split}
   &r! \nabla _i{ (T^{r})}_{i}^j\\
   =&  \epsilon_{i \, i_1 i_1 i_3}^{j  \,j_1 j_2 j_3}
   (h(\nabla_iA_{i_1j_1 }, A_{i_2j_2 }) A_{i_3j_3 }+h(A_{i_1j_1 }, \nabla_iA_{i_2j_2 }) A_{i_3j_3 }+h(A_{i_1j_1 }, A_{i_2j_2 }) \nabla_i A_{i_3j_3 })\\
    =&  \epsilon_{i \, i_1 i_1 i_3}^{j  \,j_1 j_2 j_3}
   (h(\nabla_{i_1}A_{ij_1 }, A_{i_2j_2 }) A_{i_3j_3 }+h(A_{i_1j_1 }, \nabla_{i_1}A_{ij_2 }) A_{i_3j_3 }+h(A_{i_1j_1 }, A_{i_2j_2 }) \nabla_{i_3} A_{ij_3 })\\
   =&  -\epsilon_{i \, i_1 i_1 i_3}^{j  \,j_1 j_2 j_3}
   (h(\nabla_iA_{i_1j_1 }, A_{i_2j_2 }) A_{i_3j_3 }+h(A_{i_1j_1 }, \nabla_iA_{i_2j_2 }) A_{i_3j_3 }+h(A_{i_1j_1 }, A_{i_2j_2 }) \nabla_i A_{i_3j_3 })\\
    =&-r! \nabla _i (T^{r})_{i}^j
  \end{split}
\end{equation*}
where we have used the Codazzi equation $\nabla _i A_{jk}=\nabla _j A_{ik}$ (as $N$ has constant curvature).
The second assertion is straightforward.
\end{proof}

By Lemma \ref{lem: div} we immediately have the following Schur-type theorem which is perhaps well-known to experts (we write $V s$ instead of $V\otimes s$ for a vector-valued function $V$ and a $(1,1) $ tensor $s$):
\begin{theorem}
Let $\Sigma^n$ be a closed (compact without boundary) immersed submanifold in a space form $N^m$ and $r\in\lbrace1, \cdots, n-1\rbrace$. If the traceless part $\stackrel{\circ}{T^{r}}$ of $T^{r}$ vanishes, i.e. $T^r= \frac {n-r}nH_r I$, then $H_r$ is parallel. (In particular it is constant whenever it can be defined as a scalar. )
\end{theorem}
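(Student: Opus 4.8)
The plan is to exploit Lemma \ref{lem: div} directly, since the two facts it records---namely $\mathrm{div}(T^r)=0$ and $\mathrm{tr}(T^r)=(n-r)H_r$---are exactly what is needed. Assume the traceless part $\stackrel{\circ}{T^r}=T^r-\frac{n-r}{n}H_r I$ vanishes, so that $T^r=\frac{n-r}{n}H_r I$ on all of $\Sigma$. First I would take the divergence of both sides. Using that $\mathrm{div}(fI)=\nabla f$ for a (possibly vector-valued) function $f$ on $\Sigma$ (here one computes in a local orthonormal frame: $\nabla_i(fI)^i_j=\nabla_i(f\,\delta^i_j)=\nabla_j f$), the right-hand side becomes $\frac{n-r}{n}\nabla H_r$, while the left-hand side vanishes by Lemma \ref{lem: div}. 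Since $r\le n-1$, the factor $\frac{n-r}{n}$ is nonzero, and we conclude $\nabla H_r=0$, i.e. $H_r$ is parallel. When $H_r$ is genuinely scalar-valued---e.g. $r$ even, or $\Sigma$ a hypersurface---being parallel is the same as being (locally, hence globally by connectedness) constant.

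The only point requiring a word of care is the vector-valued case (odd $r$, higher codimension), where $H_r$ is a section of the normal bundle and ``parallel'' must mean parallel with respect to the normal connection $\nabla^\perp$. I would make sure the divergence identity $\mathrm{div}(V s)$ and the trace identity in Lemma \ref{lem: div} are read with the connection on $\Sigma$ coupled to $\nabla^\perp$ on the normal bundle, which is consistent with the conventions already set up in Section \ref{sec: submfd} (the Codazzi equation used there is the full normal-bundle Codazzi equation for a space form ambient). With that reading, the computation $0=\mathrm{div}(T^r)=\frac{n-r}{n}\nabla^\perp H_r$ goes through verbatim and gives $\nabla^\perp H_r=0$.

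There is essentially no obstacle here: the statement is an immediate corollary of Lemma \ref{lem: div}, and the ``proof'' is the one-line observation that the divergence of a pure-trace tensor $fI$ is the gradient of $f$. If anything, the only thing worth spelling out is why connectedness of a closed manifold lets one upgrade ``parallel scalar'' to ``constant,'' which is standard. I would keep the proof to two or three sentences.
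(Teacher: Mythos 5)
Your proposal is correct and is essentially identical to the paper's own proof: both apply $\mathrm{div}$ to $T^r=\frac{n-r}{n}H_r I$, use $\mathrm{div}(T^r)=0$ from Lemma \ref{lem: div}, and conclude $\nabla H_r=0$ since $n-r\neq 0$. Your extra remarks about the normal connection in the vector-valued case are a sensible clarification but do not change the argument.
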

\begin{proof}

   By Lemma \ref{lem: div}, as $\stackrel \circ {T^r} = T^r- \frac{(n-r)}n H_r I$, we have
$0=\mathrm{div}(\stackrel\circ {T^r})=-\frac {n-r}n\nabla H_r $. i.e. $H_r$ is parallel.
\end{proof}
\begin{theorem}\label{thm: main}
Let $\Sigma^n$ ($n\ge 2$) be a closed immersed oriented submanifold in a space form $N^m$, $m>n$. Let $r\in \lbrace 1,\cdots, n-1\rbrace$. Assume that either
\begin{enumerate}
  \item $r$ is even, or
  \item  $r$ is odd and $N=\mathbb R^m$, or
  \item $\Sigma$ is of codimension one, i.e. a hypersurface.
\end{enumerate}
 Let $\overline H_r=\frac 1{ \mathrm{Area}(\Sigma)}\int_\Sigma H_r$ be the average of $H_r$ (which is a vector when $r$ is odd and is defined by \eqref{eq: codim 1} in the codimension one case) and $\stackrel{\circ}{T^{r}}= T^{r}- \frac{(n-r)}n H_r I$ be the traceless part of $T^{r}$. Let $\lambda$ be the first eigenvalue for the Laplacian on $\Sigma$ and suppose the Ricci curvature of $\Sigma $ is bounded from below by $-(n-1)K$, $K\ge 0$, then for $r=1,\cdots, n-1$, we have
\begin{equation}\label{eq: main}
\int_\Sigma |H_r-\overline H_r|^2 \le \frac{ n(n-1)}{(n-r)^2} (1+\frac{nK}\lambda)\int_\Sigma |\stackrel \circ {T^{r}}|^2,
\end{equation}
or equivalently,
$$ \int_\Sigma |T^{r} - \frac{n-r}n \overline H_r I|^2 \le n (1+\frac{(n-1)K}\lambda) \int_\Sigma |T^{r} - \frac{n-r}n H_r I|^2.$$
\end{theorem}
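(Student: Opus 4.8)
The plan is to run the De Lellis--Topping argument, with the twice-contracted Bianchi identity replaced by the Codazzi-type identity coming from Lemma \ref{lem: div}. First I would reduce to the situation where $H_r$ is scalar-valued and $T^r$ is an ordinary symmetric $(1,1)$-tensor. This is automatic when $r$ is even or $\Sigma$ is a hypersurface; when $r$ is odd and $N=\mathbb R^m$ one fixes a parallel orthonormal frame $\{E_\alpha\}_{\alpha=1}^m$ of $\mathbb R^m$, works with the components $H_r^\alpha=h(H_r,E_\alpha)$ and $(T^r)^\alpha$, proves the scalar inequality for each $\alpha$, and sums over $\alpha$; since the $E_\alpha$ are parallel, the divergence identity of Lemma \ref{lem: div} and all the integration-by-parts steps below decouple into these scalar statements. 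The starting point is then that, by Lemma \ref{lem: div},
\[
\mathrm{div}(\stackrel{\circ}{T^r})=\mathrm{div}(T^r)-\tfrac{n-r}{n}\,\mathrm{div}(H_r I)=-\tfrac{n-r}{n}\nabla H_r .
\]

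Since $\Sigma$ is closed and $\int_\Sigma(H_r-\overline H_r)=0$, I would solve the Poisson equation $\Delta f=H_r-\overline H_r$ on $\Sigma$, normalized so that $\int_\Sigma f=0$. Integrating by parts twice, using the identity above and the symmetry and tracelessness of $\stackrel{\circ}{T^r}$,
\[
\int_\Sigma (H_r-\overline H_r)^2=-\int_\Sigma\langle\nabla H_r,\nabla f\rangle=\frac{n}{n-r}\int_\Sigma\langle\mathrm{div}(\stackrel{\circ}{T^r}),\nabla f\rangle=-\frac{n}{n-r}\int_\Sigma\langle\stackrel{\circ}{T^r},\stackrel{\circ}{\nabla^2 f}\rangle ,
\]
where $\stackrel{\circ}{\nabla^2 f}=\nabla^2 f-\frac{\Delta f}{n}g$ is the traceless Hessian (the trace part drops out because $\mathrm{tr}\stackrel{\circ}{T^r}=0$). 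By Cauchy--Schwarz the right-hand side is at most $\frac{n}{n-r}\big(\int_\Sigma|\stackrel{\circ}{T^r}|^2\big)^{1/2}\big(\int_\Sigma|\stackrel{\circ}{\nabla^2 f}|^2\big)^{1/2}$.

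It remains to control $\int_\Sigma|\stackrel{\circ}{\nabla^2 f}|^2$ by $\int_\Sigma(\Delta f)^2=\int_\Sigma(H_r-\overline H_r)^2$. Integrating Bochner's formula $\frac12\Delta|\nabla f|^2=|\nabla^2 f|^2+\langle\nabla f,\nabla\Delta f\rangle+\mathrm{Ric}(\nabla f,\nabla f)$ over $\Sigma$ gives $\int_\Sigma|\nabla^2 f|^2=\int_\Sigma(\Delta f)^2-\int_\Sigma\mathrm{Ric}(\nabla f,\nabla f)$, whence, using $|\stackrel{\circ}{\nabla^2 f}|^2=|\nabla^2 f|^2-\frac{(\Delta f)^2}{n}$ and the Ricci lower bound,
\[
\int_\Sigma|\stackrel{\circ}{\nabla^2 f}|^2=\frac{n-1}{n}\int_\Sigma(\Delta f)^2-\int_\Sigma\mathrm{Ric}(\nabla f,\nabla f)\le\frac{n-1}{n}\int_\Sigma(\Delta f)^2+(n-1)K\int_\Sigma|\nabla f|^2 .
\]
Expanding $f$ in Laplace eigenfunctions (or directly from the variational characterization of $\lambda$) gives $\int_\Sigma|\nabla f|^2\le\frac1\lambda\int_\Sigma(\Delta f)^2$, so $\int_\Sigma|\stackrel{\circ}{\nabla^2 f}|^2\le\frac{n-1}{n}\big(1+\frac{nK}{\lambda}\big)\int_\Sigma(\Delta f)^2$.

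Substituting this into the Cauchy--Schwarz bound and writing $a=\int_\Sigma(H_r-\overline H_r)^2$, one gets $a\le\frac{n}{n-r}\big(\frac{n-1}{n}(1+\frac{nK}{\lambda})\big)^{1/2}\big(\int_\Sigma|\stackrel{\circ}{T^r}|^2\big)^{1/2}a^{1/2}$; cancelling $a^{1/2}$ and squaring yields \eqref{eq: main}. For the equivalent reformulation, expand $|T^r-\tfrac{n-r}{n}\overline H_r I|^2=|\stackrel{\circ}{T^r}|^2+\tfrac{(n-r)^2}{n}|H_r-\overline H_r|^2$ (the cross term vanishes since $\mathrm{tr}\stackrel{\circ}{T^r}=0$), integrate, and rearrange; the two inequalities match because $n(1+\tfrac{(n-1)K}{\lambda})-1=(n-1)(1+\tfrac{nK}{\lambda})$. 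I expect the only genuine subtlety to be the bookkeeping in the vector-valued odd case: one must check that Lemma \ref{lem: div}, Bochner's formula and every integration by parts really do split into scalar identities after passing to the parallel frame of $\mathbb R^m$, which is exactly the point at which the hypothesis $N=\mathbb R^m$ is used.
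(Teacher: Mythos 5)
Your proposal is correct and follows essentially the same route as the paper: solve $\Delta f=H_r-\overline H_r$, integrate by parts against the divergence-free identity from Lemma \ref{lem: div}, apply Cauchy--Schwarz to the traceless Hessian, and close the estimate with the Bochner formula and the variational characterization of $\lambda$; your componentwise treatment of the odd case via a parallel frame of $\mathbb R^m$ is just the scalarized version of the paper's vector-valued function $F=(F^1,\dots,F^m)$, and the algebra for the equivalent reformulation matches the paper's Pythagoras argument.
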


\begin{proof}
  We follow the ideas in \cite{lellis2012almost} and \cite{CZ}.  We do the case where $r$ is odd (and thus $N=\mathbb R^m$) first. We can assume $H_r-\overline H_r$ is not vanishing everywhere, otherwise there is nothing to prove.
  Let $F=(F^1,\cdots, F^m)$ be the solution to
  \begin{equation}\label{eq: delta}
    \begin{cases}
      \Delta F = H_r- \overline H_r\\
      \int_\Sigma F =0.
    \end{cases}
  \end{equation}
The solution exists because $\int_\Sigma H_r - \overline H_r=0$.
  As $\stackrel \circ {T^r} = T^r- \frac{(n-r)}n H_r I$ and $\textrm{div}(T^r)=0$ by Lemma \ref{lem: div}, (as a vector-valued $1$-form) we have
$$ \mathrm{div}(\stackrel\circ {T^r})=-\frac {n-r}n\nabla H_r.$$
Let us denote the dot product in $\mathbb R^m$ by $\cdot$ and the intrinsic inner product on $\Sigma $ by $\langle \cdot, \cdot\rangle$. Then
\begin{equation}\label{eq: sigma}
  \begin{split}
    \int_\Sigma |\Delta F|^2
    = \int_\Sigma (H_r- \overline H_r)\cdot\Delta F
    &= -\int_\Sigma \langle \nabla H_r,\nabla F\rangle\\
    &=\frac n{n-r} \int_\Sigma \langle \mathrm{div}(\stackrel \circ {T^r}),\nabla F\rangle\\
    &= \frac n{n-r}\int_\Sigma \langle -\stackrel \circ {T^r} , \nabla ^2F\rangle\\
    &= \frac n{n-r}\int_\Sigma \langle -\stackrel \circ {T^r} , \nabla ^2F- \frac {\Delta F }nI\rangle\\
    &\le\frac n{n-r} \|\stackrel \circ {T^r} \|_{L^2}\|\nabla ^2 F- \frac{\Delta F}n I\|_{L^2}.
  \end{split}
\end{equation}
We have
  \begin{equation}\label{eq: norm}
\begin{split}
  \|\nabla ^2 F -\frac {\Delta F}n I\|_{L^2}^2
  &= \int_\Sigma |\nabla ^2 F|^2 +\frac 1n \int_\Sigma |\Delta F|^2 -\frac 2n \int_\Sigma |\Delta F|^2\\
  &= \int_\Sigma |\nabla ^2 F|^2 -\frac 1n \int_\Sigma  |\Delta F|^2.
\end{split}
  \end{equation}
By Bochner formula, we have
\begin{equation}\label{eq: Bochner}
  \begin{split}
    \int_\Sigma |\nabla ^2F|^2
    &= \int_ M |\Delta F|^2 -\int_\Sigma Ric(\nabla F, \nabla F)\\
    &\le \int_\Sigma |\Delta F|^2 +(n-1)K\int_\Sigma |\nabla F|^2.
  \end{split}
\end{equation}
Here $Ric(\nabla F,\nabla F)=\displaystyle \sum_{\substack{1\le i,j\le n\\ 1\le l\le m}} R_{ij}\nabla _iF^l \nabla _j F^l$.
Consider
\begin{equation*}
  \begin{split}
    \int_\Sigma |  \nabla F|^2
    =-\int_\Sigma F \cdot\Delta F
    &\le (\int_\Sigma |F|^2)^\frac 12\left(\int_\Sigma  |\Delta F|^2\right)^\frac12\\
    &\le \left(\frac {\int_\Sigma |  \nabla F|^2}{\lambda}\right)^\frac 12\left(\int_\Sigma   |\Delta F|^2\right)^\frac12.
  \end{split}
\end{equation*}
Thus
\begin{equation}\label{ineq: lambda}
\int_\Sigma |\nabla F|^2 \le \frac 1{\lambda} \int_\Sigma  |\Delta F|^2.
\end{equation}
Here we have used the fact that the first eigenvalue $\lambda=\min \{ \frac {\int_\Sigma | \nabla \phi|^2}{\int_\Sigma \phi^2}: \int_\Sigma \phi=0, \phi\ne0\}$.
In view of \eqref{eq: Bochner} and \eqref{ineq: lambda}, \eqref{eq: norm} becomes
\begin{equation}\label{eq: Ric}
  \begin{split}
  \|\nabla ^2 F -\frac {\Delta F}n I\|_{L^2}^2
&\le (\frac{n-1}n) (1+\frac{nK}\lambda)\int_\Sigma |\Delta F|^2 .
  \end{split}
\end{equation}
Substitute this into \eqref{eq: sigma}, we obtain \eqref{eq: main}:
$$
\int_\Sigma |H_r-\overline H_r|^2 \le \frac{ n(n-1)}{(n-r)^2} (1+\frac{nK}\lambda)\int_\Sigma |\stackrel \circ {T^{r}}|^2.
$$
As $T^{r}- \frac{n-r}n \overline H_r I= \stackrel\circ {T^{r}}+\frac{n-r}n (H_r-\overline H_r)I$, by Pythagoras theorem we have
$$|T^{r}- \frac{n-r}n \overline H_r I |^2= |\stackrel \circ {T^{r}}|^2 +\frac {(n-r)^2}n|H_r-\overline H_r|^2.$$
Therefore \eqref{eq: main} can be rephrased as
$$ \int_\Sigma |T^{r} - \frac{n-r}n \overline H_r I|^2 \le n (1+\frac{(n-1)K}\lambda) \int_\Sigma |T^{r} - \frac{n-r}n H_r I|^2.$$

For the remaining cases where $r$ is even or $\Sigma$ is a hypersurface, as $H_r$ is scalar valued, just replace $F$ in \eqref{eq: delta} by a scalar valued function $f$ and apply the same
argument, we can get the result.
\end{proof}

\begin{remark}
  When $r=1$ and $\Sigma$ is a hypersurface, as $T^1= H_1 I-A$, it is easy to see that $\stackrel \circ {T^{1}}= -\stackrel \circ A$ where $\stackrel \circ A$ is the traceless part of the second fundamental form $A$. This generalizes \cite{Perez} Theorem 3.1 (see also \cite{CZ}). In the codimension one case, this recovers \cite{C} Theorem 1.10.
\end{remark}

\begin{remark}
  For an embedded hypersurface in Euclidean space, having nonnegative Ricci curvature is equivalent to $A\ge 0$ (i.e. convex), see \cite[p. 48]{Perez}. So when $K=0$, the curvature assumptions in Theorem \ref{thm: main} can be replaced by $\Sigma$ being convex when it is an embedded hypersurface.
\end{remark}

\section{Relations with Lovelock curvatures}\label{sec: lovelock}

In this section, we investigate the relation between Theorem \ref{thm: main} and an analogous result of Ge-Wang-Xia \cite{geproblems}.
Following \cite{geproblems}, we define the Lovelock curvatures (or the so called $2k$-dimensional Euler density in Physics) of a Riemannian manifold $(M^n,g)$, $k<\frac n2$, by
\begin{equation}
  R^{(k)}= \frac 1{2^k} \epsilon_{ i_1 \ldots  i_{2k}}^{j_1 \ldots  j_{2k}}{R_{j_1 j_2}}^{i_1i_2}\cdots {R_{j_{2k-1} j_{2k}}}^{i_{2k-1} i_{2k}}.
\end{equation}
We use the convention that $R_{ijij}$ is the sectional curvature. It can be easily seen that $R^{(1)}$ is the scalar curvature. We also define the generalized Einstein $2$-tensor by defining
$$ {E^{(k)}}_{i}^j=\frac 1{2^{k+1}} \epsilon_{ ii_1 \ldots  i_{2k}}^{jj_1 \ldots  j_{2k}}{R_{j_1 j_2}}^{i_1i_2}\cdots {R_{j_{2k-1} j_{2k}}}^{i_{2k-1} i_{2k}}.$$
We have the following analogue of Lemma \ref{lem: div} for $E^{(k)}$:
\begin{lemma}\label{lem: div3}
We have
$$ \mathrm{tr}(E^{(k)})= \frac{n-2k}2R^{(k)}\textrm{\quad and \quad }\nabla ^i {E^{(k)}}_i^j=0.$$
\end{lemma}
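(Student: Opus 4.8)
The plan is to prove the two identities separately, both by purely algebraic/combinatorial manipulations of the generalized Kronecker delta $\epsilon$ together with the (second) Bianchi identity, exactly in parallel with the argument for Lemma \ref{lem: div}. The trace identity is the easy part: I would contract the index $j$ with $i$ in the definition of $E^{(k)}$, and then compare with the definition of $R^{(k)}$. The key combinatorial fact is that
\[
\sum_{i} \epsilon^{i i_1 \ldots i_{2k}}_{i j_1 \ldots j_{2k}} = (n-2k)\,\epsilon^{i_1 \ldots i_{2k}}_{j_1 \ldots j_{2k}},
\]
since contracting the first pair of a generalized Kronecker delta of ``rank'' $2k+1$ on an $n$-dimensional manifold produces the rank-$2k$ delta with a factor $n-(2k+1)+1 = n-2k$. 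Plugging this in gives
\[
\mathrm{tr}(E^{(k)}) = \frac{1}{2^{k+1}}(n-2k)\,\epsilon^{i_1 \ldots i_{2k}}_{j_1 \ldots j_{2k}}{R_{j_1 j_2}}^{i_1 i_2}\cdots {R_{j_{2k-1}j_{2k}}}^{i_{2k-1}i_{2k}} = \frac{n-2k}{2}R^{(k)},
\]
which is the first claim.

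For the divergence identity I would mimic the $r$ odd computation in Lemma \ref{lem: div}. Taking $\nabla^i$ of ${E^{(k)}}_i^j$ and using the Leibniz rule distributes the covariant derivative over the $k$ curvature factors, producing $k$ terms, a typical one being
\[
\frac{1}{2^{k+1}}\epsilon^{j j_1 \ldots j_{2k}}_{i i_1 \ldots i_{2k}}\;\nabla^i {R_{j_1 j_2}}^{i_1 i_2}\cdots {R_{j_{2k-1} j_{2k}}}^{i_{2k-1} i_{2k}}.
\]
In each such term, the three indices $i, i_1, i_2$ (say) are all antisymmetrized against $j$'s via the $\epsilon$ symbol, so I may antisymmetrize $\nabla^i {R_{j_1 j_2}}^{i_1 i_2}$ over $\{i, i_1, i_2\}$; by the second Bianchi identity $\nabla_{[a}R_{bc]de} = 0$ this antisymmetrization vanishes. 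Hence every one of the $k$ terms is zero and $\nabla^i {E^{(k)}}_i^j = 0$. The only points requiring care are bookkeeping ones: making sure the index that the outer divergence contracts really does sit inside the $\epsilon$ symbol in an antisymmetrized slot (it does, because the definition of $E^{(k)}$ puts it there explicitly), and correctly pairing up which two upper and lower curvature indices belong to the differentiated factor so that the Bianchi identity applies to the right triple.

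I expect the divergence identity to be the main obstacle, though a mild one: the bulk of the work is setting up consistent index notation for the contracted Bianchi argument so that the antisymmetrization claim is manifestly legitimate for every term in the Leibniz expansion. Since $N$ is not involved here (this is an intrinsic statement about $(M^n,g)$), no Codazzi equation is needed — only the differential Bianchi identity, which holds on any Riemannian manifold — so unlike Lemma \ref{lem: div} there is no constant-curvature hypothesis required. One could alternatively cite \cite{geproblems} directly, where this lemma is established, but the above self-contained argument is short enough to include.
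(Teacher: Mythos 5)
Your proof is correct and takes essentially the same approach as the paper: the trace identity follows from contracting one pair of slots of the generalized Kronecker symbol (which the paper dismisses as ``straightforward''), and the divergence identity follows because the $\epsilon$-contraction antisymmetrizes the derivative index with the two raised indices of the differentiated curvature factor, which the second Bianchi identity kills. The paper's explicit $k=1$ computation via Bianchi plus index relabelling is exactly this antisymmetrization argument in disguise, and your remark that no constant-curvature or Codazzi hypothesis is needed here is also consistent with the paper.
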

\begin{proof}
  The first assertion is a straightforward calculation. For the second assertion, for the sake of demonstration let $k=1$. Then this follows from
  \begin{equation*}
    \begin{split}
    2^{k+1}\nabla ^i {E^{(k)}}_i^j
    &=\epsilon_{i  i_1 i_2}^{j j_1j_2}\nabla^i{R_{j_1 j_2}}^{i_1i_2}\\
    &=-\epsilon_{i  i_1 i_2}^{j j_1j_2}(\nabla^{i_1}{R_{j_1 j_2}}^{i_2i}+\nabla^{i_2}{R_{j_1 j_2}}^{ii_1})\\
    &=-\epsilon_{i_2  i i_1 }^{j j_1j_2}\nabla^i{R_{j_1 j_2}}^{i_1i_2}-\epsilon_{i_1i_2i }^{j j_1j_2}\nabla^i{R_{j_1 j_2}}^{i_1i_2}\\
    &=-2\epsilon_{i  i_1 i_2}^{j j_1j_2}\nabla^i{R_{j_1 j_2}}^{i_1i_2}.
    \end{split}
  \end{equation*}
  Here we have used the Bianchi identity in the second line.
\end{proof}
We see that $E^{(k)}$ is divergence free and indeed $E^{(1)}$ is the Einstein tensor. By Lemma \ref{lem: div3}, it is clear that we can prove the analogue of Theorem \ref{thm: main} in this setting. Indeed, by using the same method, Ge, Wang and Xia \cite{geproblems} proved that (they have assumed $Ric\ge 0$, but their result can be easily extended to the version below):
\begin{theorem}[\cite{geproblems} Theorem 4]\label{thm: ge}
  Let $(M,g)$ be a closed oriented Riemannian manifold with $Ric\ge -(n-1)K$, $K\ge0$ and $1\le k<\frac n2$, then
  $$ \int_M |R^{(k)}-\overline R^{(k)}|^2 \le \frac{4n(n-1)}{(n-2k)^2}(1+\frac{nK}\lambda)\int_M |\stackrel{\circ}{E^{(k)}}|^2.$$
  Here $\overline R^{(k)}$ is the average of $R^{(k)}$.
\end{theorem}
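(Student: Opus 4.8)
The plan is to transcribe, essentially line by line, the De Lellis--Topping argument used in the proof of Theorem \ref{thm: main}, with $E^{(k)}$ in place of $T^r$, $R^{(k)}$ in place of $H_r$, and Lemma \ref{lem: div3} in place of the twice-contracted Bianchi identity. Since $\mathrm{tr}(E^{(k)})=\frac{n-2k}{2}R^{(k)}$, the traceless part is $\stackrel{\circ}{E^{(k)}}=E^{(k)}-\frac{n-2k}{2n}R^{(k)}I$, and the divergence-free identity $\nabla^i {E^{(k)}}_i^j=0$ then gives $\mathrm{div}(\stackrel{\circ}{E^{(k)}})=-\frac{n-2k}{2n}\nabla R^{(k)}$. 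Note that the quantity $n-r$ of Theorem \ref{thm: main} is replaced here by $\frac{n-2k}{2}$, which is exactly why the constant $\frac{n(n-1)}{(n-r)^2}$ there becomes $\frac{4n(n-1)}{(n-2k)^2}$ here.

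First I would assume $R^{(k)}-\overline R^{(k)}\not\equiv 0$ (otherwise the inequality is trivial) and solve $\Delta f=R^{(k)}-\overline R^{(k)}$ with $\int_M f=0$, which is possible since $\int_M(R^{(k)}-\overline R^{(k)})=0$. Then, integrating by parts twice and inserting the trace-free correction $\frac{\Delta f}{n}I$ (harmless since $\stackrel{\circ}{E^{(k)}}$ is trace-free),
\begin{equation*}
\begin{split}
\int_M |R^{(k)}-\overline R^{(k)}|^2
&=\int_M (R^{(k)}-\overline R^{(k)})\,\Delta f = -\int_M \langle\nabla R^{(k)},\nabla f\rangle\\
&=\frac{2n}{n-2k}\int_M \langle\mathrm{div}(\stackrel{\circ}{E^{(k)}}),\nabla f\rangle
=-\frac{2n}{n-2k}\int_M \Big\langle\stackrel{\circ}{E^{(k)}},\,\nabla^2 f-\tfrac{\Delta f}{n} I\Big\rangle\\
&\le \frac{2n}{n-2k}\,\|\stackrel{\circ}{E^{(k)}}\|_{L^2}\,\Big\|\nabla^2 f-\tfrac{\Delta f}{n} I\Big\|_{L^2}.
\end{split}
\end{equation*}
Next, exactly as in \eqref{eq: norm}, \eqref{eq: Bochner}, \eqref{ineq: lambda} and \eqref{eq: Ric}, the identity $\|\nabla^2 f-\tfrac{\Delta f}{n}I\|_{L^2}^2=\int_M|\nabla^2 f|^2-\tfrac1n\int_M|\Delta f|^2$, the Bochner formula together with $Ric\ge -(n-1)K$, and the first-eigenvalue inequality $\int_M|\nabla f|^2\le \lambda^{-1}\int_M|\Delta f|^2$ combine to give
$$\Big\|\nabla^2 f-\tfrac{\Delta f}{n} I\Big\|_{L^2}^2\le \frac{n-1}{n}\Big(1+\frac{nK}{\lambda}\Big)\int_M|\Delta f|^2=\frac{n-1}{n}\Big(1+\frac{nK}{\lambda}\Big)\int_M|R^{(k)}-\overline R^{(k)}|^2.$$
Substituting this into the previous display and dividing through by $\big(\int_M|R^{(k)}-\overline R^{(k)}|^2\big)^{1/2}$ (legitimate by the assumption that it does not vanish identically) yields $\int_M |R^{(k)}-\overline R^{(k)}|^2 \le \frac{4n(n-1)}{(n-2k)^2}(1+\frac{nK}{\lambda})\int_M|\stackrel{\circ}{E^{(k)}}|^2$, as claimed.

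I do not expect a genuine obstacle here. The only nontrivial structural input is the divergence-free property of $E^{(k)}$, playing the role of the contracted Bianchi identity, and this is precisely Lemma \ref{lem: div3}, already established; the rest of the proof is a verbatim repetition of the argument for Theorem \ref{thm: main} with the constants adjusted as indicated. If anything needs care it is merely the bookkeeping of the numerical constants and confirming that the extension from the hypothesis $Ric\ge 0$ in \cite{geproblems} to $Ric\ge-(n-1)K$ costs exactly the factor $1+\frac{nK}{\lambda}$, which is transparent from the Bochner step above.
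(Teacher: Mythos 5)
Your proposal is correct and is exactly the route the paper intends: the paper does not write out a proof of this cited result, but states that it follows from the De Lellis--Topping method of Theorem \ref{thm: main} once Lemma \ref{lem: div3} supplies the trace identity $\mathrm{tr}(E^{(k)})=\frac{n-2k}{2}R^{(k)}$ and the divergence-free property of $E^{(k)}$, which is precisely how you proceed. Your constant bookkeeping (the factor $\frac{2n}{n-2k}$ squaring to $\frac{4n^2}{(n-2k)^2}$ and combining with $\frac{n-1}{n}(1+\frac{nK}{\lambda})$) and the Bochner step extending $Ric\ge 0$ to $Ric\ge-(n-1)K$ are both accurate.
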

We will show that Theorem \ref{thm: main} is equivalent to Theorem \ref{thm: ge} in the case where $r$ is even.
\begin{proposition}\label{prop: E=T}
For an immersed submanifold  $\Sigma\subset \mathbb R^{n+1}$, for $k=1, \cdots, \lfloor \frac n2\rfloor$,
  we have $$ E^{(k)}=\frac { (2k)!}2T^{2k}.$$
\end{proposition}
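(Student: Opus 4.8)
The plan is to compare the two tensors directly in a local orthonormal frame, using the Gauss equation to rewrite the intrinsic curvature of $\Sigma$ in terms of the second fundamental form. Since $\Sigma\subset\mathbb R^{n+1}$ is a hypersurface, the Gauss equation reads ${R_{jk}}^{\,il}=A_i^{\,j}A_l^{\,k}-A_l^{\,j}A_i^{\,k}$ (with the sign conventions of the paper), so each factor ${R_{j_{2a-1}j_{2a}}}^{i_{2a-1}i_{2a}}$ in the definition of ${E^{(k)}}_i^j$ becomes a sum of two products of two entries of $A$. First I would substitute these Gauss-equation expressions into
$$ {E^{(k)}}_{i}^j=\frac 1{2^{k+1}} \epsilon_{ ii_1 \ldots  i_{2k}}^{jj_1 \ldots  j_{2k}}{R_{j_1 j_2}}^{i_1i_2}\cdots {R_{j_{2k-1} j_{2k}}}^{i_{2k-1} i_{2k}}$$
and expand the product of $k$ such factors into $2^k$ terms.

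Next I would exploit the total antisymmetry of the generalized Kronecker symbol $\epsilon^{jj_1\cdots j_{2k}}_{ii_1\cdots i_{2k}}$ in the lower indices (and separately in the upper indices) to see that all $2^k$ terms coincide after relabelling dummy indices: swapping $i_{2a-1}\leftrightarrow i_{2a}$ together with $j_{2a-1}\leftrightarrow j_{2a}$ leaves the $\epsilon$ symbol unchanged but exchanges the two summands of the $a$-th Gauss factor. This collapses the $2^k$ terms into a single term with a multiplicity factor $2^k$, cancelling the $2^{k+1}$ in the denominator up to a factor $2$, and leaves
$$ {E^{(k)}}_i^j=\frac12\,\epsilon^{jj_1\cdots j_{2k}}_{ii_1\cdots i_{2k}}\,A_{i_1}^{\,j_1}A_{i_2}^{\,j_2}\cdots A_{i_{2k-1}}^{\,j_{2k-1}}A_{i_{2k}}^{\,j_{2k}},$$
which, after comparing with the definition of ${(T^{2k})}_i^{\,j}$ (whose normalization carries a $\frac1{(2k)!}$), is exactly $\frac{(2k)!}{2}{(T^{2k})}_i^{\,j}$. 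I should be a little careful about index placement: since everything is in an orthonormal frame, raising and lowering is free, and $h(A_{ab},A_{cd})=A_{ab}A_{cd}$ with $A_{ab}=A_a^{\,b}$ scalar, so the inner products in the definition of $T^{2k}$ match the paired products produced by the Gauss equation.

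The main obstacle is bookkeeping: getting the combinatorial factors and the sign conventions exactly right, in particular checking that the sign produced by the Gauss equation (the term $-A_l^{\,j}A_i^{\,k}$) is precisely the one absorbed by the antisymmetrization of $\epsilon$, so that all $2^k$ terms add rather than cancel. A clean way to organize this is to first verify the identity for $k=1$ — where $E^{(1)}$ is the ordinary Einstein tensor and $T^2$ is the classical second Newton transformation, and the identity $E^{(1)}=\frac{2!}{2}T^2=T^2$ can be checked by hand from $R_{ij}=HA_{ij}-A_i^{\,k}A_{kj}$ and $R=H^2-|A|^2$ — and then run the general antisymmetrization argument, which is uniform in $k$. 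One should also note that the restriction to codimension one is essential here only because the Gauss equation for higher codimension picks up extra normal-bundle terms $h(A_{i_1j_1},A_{i_2j_2})-h(A_{i_1j_2},A_{i_2j_1})$ rather than products of scalars; but in codimension one the two points of view genuinely coincide, which is the content of the proposition.
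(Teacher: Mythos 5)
Your proof is correct and follows essentially the same route as the paper's: substitute the Gauss equation into the definition of $E^{(k)}$, expand the product into $2^k$ terms, and use the antisymmetry of the generalized Kronecker symbol to collapse them into $2^k$ copies of a single term, which matches $2^k(2k)!\,(T^{2k})_i^{\,j}$ against the prefactor $2^{-(k+1)}$. One small correction to your closing remark: the codimension-one restriction is not actually essential, since in a flat ambient space of any codimension the Gauss equation reads ${R_{ij}}^{kl}=h(A_{ik},A_{jl})-h(A_{il},A_{jk})$ and the definition of $T^{2k}$ for even order is already built from exactly these inner products $h(A_{i_aj_a},A_{i_bj_b})$, so the identical argument goes through for $\Sigma\subset\mathbb R^m$ with $m>n$ arbitrary --- which is in fact needed later when Nash embedding is invoked in Proposition \ref{prop: equiv}.
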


\begin{proof}
We denote the dot product in $\mathbb R^m$ by $\cdot$. Using a local orthonormal frame, by Gauss equation, we have
$${R_{ij}}^{kl}= A_{ik} \cdot A_{jl}-A_{il} \cdot A_{jk}.$$
Thus
\begin{equation*}
  \begin{split}
    &\epsilon_{i i_1 \ldots  i_{2k}}^{jj_1 \ldots  j_{2k}}{R_{j_1 j_2}}^{i_1i_2}\cdots {R_{j_{2k-1} j_{2k}}}^{i_{2k-1} i_{2k}}\\
    =&\epsilon_{i i_1 \ldots  i_{2k}}^{jj_1 \ldots  j_{2k}}(A_{i_1j_1}  \cdot A_{i_2j_2}-A_{i_2j_1}  \cdot A_{i_1j_2}  )\cdots (A_{i_{2k-1}j_{2k-1}}\cdot A_{i_{2k}j_{2k}}-A_{i_{2k}j_{2k-1}}\cdot A_{i_{2k-1}j_{2k}})\\
    =&2^k\epsilon_{i i_1 \ldots  i_{2k}}^{jj_1 \ldots  j_{2k}}(A_{i_1j_1}\cdot A_{i_2j_2})\cdots (A_{i_{2k-1}j_{2k-1}}\cdot A_{i_{2k}j_{2k}})\\
    =&2^k(2k)!{(T^{2k})}_i^{\,j}.
  \end{split}
\end{equation*}
This implies the result.
\end{proof}
\begin{proposition}\label{prop: equiv}
Theorem \ref{thm: main} is equivalent to
Theorem \ref{thm: ge} when $r=2k$ and $N=\mathbb R^m$.
\end{proposition}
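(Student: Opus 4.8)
The plan is to translate the two inequalities into each other through the purely algebraic dictionary provided by Proposition \ref{prop: E=T}. Fix $\Sigma^n \subset \mathbb R^m$ and $r = 2k$ with $1 \le k < n/2$, and equip $\Sigma$ with its induced metric $g$. By Proposition \ref{prop: E=T} we have $E^{(k)} = \tfrac{(2k)!}{2}\,T^{2k}$ as $(1,1)$-tensors on $\Sigma$. Taking traces and combining Lemma \ref{lem: div3} ($\mathrm{tr}(E^{(k)}) = \tfrac{n-2k}{2}R^{(k)}$) with Lemma \ref{lem: div} ($\mathrm{tr}(T^{2k}) = (n-2k)H_{2k}$) gives $\tfrac{n-2k}{2}R^{(k)} = \tfrac{(2k)!}{2}(n-2k)H_{2k}$, and since $n - 2k > 0$ this yields the scalar identity $R^{(k)} = (2k)!\,H_{2k}$ pointwise on $\Sigma$. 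Subtracting the corresponding multiples of the identity tensor then gives $\stackrel{\circ}{E^{(k)}} = \tfrac{(2k)!}{2}\stackrel{\circ}{T^{2k}}$ for the traceless parts, because $\stackrel{\circ}{E^{(k)}} = E^{(k)} - \tfrac1n\mathrm{tr}(E^{(k)})I = \tfrac{(2k)!}{2}\bigl(T^{2k} - \tfrac1n\mathrm{tr}(T^{2k})I\bigr)$.

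With this dictionary the rest is substitution. Since $R^{(k)}$ and $H_{2k}$ differ by the constant $(2k)!$ we have $\overline{R^{(k)}} = (2k)!\,\overline{H_{2k}}$, hence $|R^{(k)} - \overline{R^{(k)}}|^2 = ((2k)!)^2|H_{2k} - \overline{H_{2k}}|^2$, while $|\stackrel{\circ}{E^{(k)}}|^2 = \tfrac{((2k)!)^2}{4}|\stackrel{\circ}{T^{2k}}|^2$. Multiplying the inequality of Theorem \ref{thm: main} (for $r = 2k$) by $((2k)!)^2$ therefore turns it into exactly the inequality of Theorem \ref{thm: ge} for the Riemannian manifold $(\Sigma, g)$: the left-hand side becomes $\int_\Sigma |R^{(k)} - \overline{R^{(k)}}|^2$, the factor $((2k)!)^2|\stackrel{\circ}{T^{2k}}|^2 = 4|\stackrel{\circ}{E^{(k)}}|^2$ on the right produces the $4$, and the constant $\tfrac{n(n-1)}{(n-2k)^2}(1+\tfrac{nK}{\lambda})$ becomes $\tfrac{4n(n-1)}{(n-2k)^2}(1+\tfrac{nK}{\lambda})$. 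The first eigenvalue $\lambda$ and the Ricci lower bound $-(n-1)K$ are unchanged since they are intrinsic to $\Sigma$. The computation is reversible, so conversely Theorem \ref{thm: ge} (applied to $(\Sigma,g)$) gives back Theorem \ref{thm: main} for $r = 2k$, $N = \mathbb R^m$. To see this as an equivalence of the two theorems in full generality — Theorem \ref{thm: ge} being stated for an arbitrary closed oriented Riemannian manifold $(M^n,g)$ — one additionally invokes the Nash embedding theorem to realize $(M,g)$ isometrically as a submanifold of some $\mathbb R^m$, so that both quantities $R^{(k)}$, $E^{(k)}$ are recovered extrinsically.

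I do not expect any genuine obstacle: the argument is bookkeeping built on Proposition \ref{prop: E=T} and the two trace lemmas. The only points needing care are tracking the combinatorial constants — the factor $(2k)!$ relating $R^{(k)}$ to $H_{2k}$ and the extra $\tfrac12$ in Proposition \ref{prop: E=T} relating $E^{(k)}$ to $T^{2k}$ — so that $\tfrac{n(n-1)}{(n-2k)^2}$ turns precisely into $\tfrac{4n(n-1)}{(n-2k)^2}$, and observing that every geometric quantity entering Theorem \ref{thm: ge} is intrinsic, so that no information is lost in passing between the extrinsic and intrinsic formulations.
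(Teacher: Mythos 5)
Your proposal is correct and follows the same route as the paper: one direction via the algebraic dictionary of Proposition \ref{prop: E=T}, the other via the Nash isometric embedding theorem. The paper leaves the constant bookkeeping implicit, whereas you verify explicitly that $R^{(k)}=(2k)!\,H_{2k}$ and $\stackrel{\circ}{E^{(k)}}=\tfrac{(2k)!}{2}\stackrel{\circ}{T^{2k}}$ convert $\tfrac{n(n-1)}{(n-2k)^2}$ into $\tfrac{4n(n-1)}{(n-2k)^2}$; this checks out.
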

\begin{proof}
To see that Theorem \ref{thm: main} is equivalent to Theorem \ref{thm: ge} when $r$ is even, we just observe that by Proposition \ref{prop: E=T}, clearly Theorem \ref{thm: ge} implies our result in the even case and when $N$ is Euclidean. On the other hand, since any manifold can be isometrically embedded into some $\mathbb R^m$ for $m$ large enough \cite{nash1956imbedding}, we see that our result also implies Theorem \ref{thm: ge}.
\end{proof}

\section{Equality case of Theorem \ref{thm: main}}\label{sec: equality}


It is easy to see from the proof that if the Ricci curvature assumption in Theorem \ref{thm: main} is strengthened to $Ric>-(n-1)Kg$, then $H_r=\overline H_r$ (as $F$ is constant). On the other hand, it is more subtle if we omit this assumption and so far we have only got some partial results.

The equality case for $r=1$ and when $\Sigma$ is an immersed hypersurface in the Euclidean space $\mathbb R^m$, the hyperbolic space $\mathbb H^m$ or the hemisphere $\mathbb S^m_+$, has been considered in \cite{CZ}, in which they prove that $\Sigma$ is a distance sphere. It seems that their proof  cannot be modified directly for our case because in their proof it is essential that $\Sigma$ contains a point whose Ricci curvature is positive, which is not true for submanifold in higher codimension in general. However it is easy to modify their proof with an additional assumption:

\textbf{The case for $r=1$ and  $Ric>-(n-1)Kg$ at one point. }\\
Suppose the equality case holds, then from \eqref{eq: sigma}, \eqref{eq: Bochner} and \eqref{eq: Ric} we know that $T^1-\frac{n-1}nH_1 I$ and $\nabla ^2 F -\frac{\Delta F}n I$ are linearly dependent and $Ric(\nabla F, \nabla F)+(n-1)K|\nabla F|^2=0$.

Suppose $\nabla ^2 F- \frac{\Delta F }n I=0$, then from \eqref{eq: sigma} we have $H_1=\overline H_1$. Otherwise there exists a constant $\mu$ such that
\begin{equation}\label{eq: mu}
 T^1-\frac{n-1}n H_r I= \mu (\nabla ^2 F-\frac {\Delta F}nI).
\end{equation}
%
Since $Ric+(n-1)Kg>0$ at $p$, $F$ is constant in a neighborhood of $p$.
By \eqref{eq: mu}, $T^r- \frac{n-1}nH_1 I=0$ and thus $H_1=\overline H_1$ is constant in a neighborhood of $p$. Suppose $F$ is not constant on $\Sigma$, then there is a smooth curve $\gamma:\mathbb R\to \Sigma$
with $\gamma(0)=p$ such that $0<t_0=\inf \lbrace t>0: F\circ\gamma\textrm{ is constant on }[0,t]\rbrace<\infty$. Let $q=\gamma(t_0)$.
In view of \eqref{eq: mu} and by continuity, at $q$, we have
\begin{equation}\label{eq: Ric(q)}
 T^1= \frac{n-1}n H_1 I = \frac{n-1}n \overline H_1 I .
\end{equation}

On the other hand, it is not hard to see that $T^1=H_1 I-A$ (i.e. $(T^1)_i^j= H_1\delta_i^j-A_{ij}$) and so by Gauss equation ($c$ is the curvature of $N$), 
\begin{equation*}
  \begin{split}
R_i^j
&= (n-1)c\delta_i^j +h(H_1, A_{ij})-\sum_k h(A_{ik}, A_{kj})\\
&= (n-1)c\delta_i^j +\sum_k h((T^1)_i^k, A_{kj})\\
&= (n-1)c\delta_i^j +\sum_k h((T^1)_i^k, H_1\delta_k^j -{(T^1)}_{k}^j).
  \end{split}
\end{equation*}

 So by \eqref{eq: Ric(q)}, we have $R_i^j(q)+(n-1)K\delta_i^j= R_i^j(p)+(n-1)K\delta _i^j$ and in particular it is positive definite at $q$. This implies $F\circ \gamma$ is constant near $t=t_0$, a contradiction. We conclude that $H_1$ is constant. In the particular case where $\Sigma$ is an embedded hypersurface in $N=\mathbb R^m$, $\mathbb H^m$ or $\mathbb S^m_+$, as $T^1= H_1 I-A$, it is easy to see that $A= \frac {H_1}ng=\frac{\overline H_1}ng$, i.e. $\Sigma$ is totally umbilic and so is a geodesic hypersphere in $N$.

\textbf{The case for $r=2$. }\\
Suppose $\Sigma^n$ is immersed in a space form $N^m$ of curvature $c$, using the same computations as in the proof of Proposition \ref{prop: E=T}, we can get
\begin{equation}\label{eq: E}
  E^{(1)}= T^2+ {{n-2}\choose 2 }cI.
\end{equation}
Here ${l\choose k}=0$ if $k>l$. On the other hand, it is not hard to see that $E^{(1)}= Ric-\frac R2 g$ is the Einstein tensor (one way of seeing that without computing is to observe that $E^{(1)}$ is a $2$-tensor which contains only linear term involving the curvature and is divergence free, thus, up to constant, it must be the Einstein tensor). Thus
$$\stackrel{\circ }{T^2}=\stackrel\circ{Ric}=Ric -\frac Rn I.$$
 Note also that $R^{(1)}= R$ and is equal to $H_2$ up to an additive constant which only depends on $c$ and $n$. Thus our result in this case is reduced to Theorem 1.2 of \cite{cheng2012generalization} or the $k=1$ case of Theorem \ref{thm: ge}. By the rigidity case of \cite{cheng2012generalization} Theorem 1.2, we deduce that the equality holds in Theorem \ref{thm: main} in the $r=2$ case if and only if $\Sigma$ is Einstein. Therefore $R$ is constant and so is $H_2$ by Gauss equation.  In particular, if $\Sigma$ is an embedded hypersurface in $\mathbb R^m$, $\mathbb H^m$ or the hemisphere $\mathbb S^m_+$, then by \cite[Theorems 4,7 and 10]{montiel1991compact} it is a geodesic hypersphere.

 Let us summarize the known results:
 \begin{theorem}
With the assumption as in Theorem \ref{thm: main}, suppose the inequality in \eqref{eq: main} becomes an equality. Assume either
\begin{enumerate}
  \item
  $r=1$ and $\Sigma$ is an immersed hypersurface in $\mathbb R^{n+1}$, $\mathbb H^{n+1}$ or $\mathbb S^{n+1}_+$, or
  \item
  $r=1$ and $Ric>-(n-1)Kg$ at one point, or
  \item
  $r=2$, or
  \item
  $1\le r\le n-1$ and  $Ric>-(n-1)Kg$.
\end{enumerate}
Then
$H_r=\overline H_r$ is constant. In the case where $\Sigma$ is an embedded hypersurface in $\mathbb R^m$, $\mathbb H^m$ or $\mathbb S^m_+$ (the hemisphere), then $\Sigma$ is a geodesic hypersphere.
 \end{theorem}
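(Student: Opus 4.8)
The plan is to assemble the theorem by collecting the four cases that have already been treated individually in this section, plus the observation at the very beginning of the section about the strengthened Ricci hypothesis. So the proof is essentially bookkeeping: it cites the arguments just given rather than reproving anything.

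\textbf{Case (4):} This is the easiest. As noted at the start of Section \ref{sec: equality}, if $Ric>-(n-1)Kg$ everywhere then inequality \eqref{ineq: lambda} is strict unless $\nabla F\equiv 0$ (the Bochner term $Ric(\nabla F,\nabla F)+(n-1)K|\nabla F|^2$ is then strictly positive wherever $\nabla F\ne 0$), so equality in \eqref{eq: main} forces $F$ constant, hence $\Delta F = H_r-\overline H_r\equiv 0$. Thus $H_r=\overline H_r$, which is then trivially constant.

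\textbf{Case (2):} This was carried out in detail above under the heading ``The case for $r=1$ and $Ric>-(n-1)Kg$ at one point.'' Equality in the Cauchy--Schwarz step of \eqref{eq: sigma}, together with equality in \eqref{eq: Bochner} and \eqref{eq: Ric}, gives that $T^1-\frac{n-1}nH_1I$ is proportional to $\nabla^2F-\frac{\Delta F}nI$ and that $Ric(\nabla F,\nabla F)+(n-1)K|\nabla F|^2\equiv 0$. Either $\nabla^2F-\frac{\Delta F}nI\equiv 0$, giving $H_1=\overline H_1$ directly from \eqref{eq: sigma}; or one propagates the identity $T^1=\frac{n-1}n\overline H_1I$ out of the point $p$ where $Ric+(n-1)Kg>0$ along a curve, using the Gauss-equation formula $R_i^j=(n-1)c\delta_i^j+\sum_k h((T^1)_i^k, H_1\delta_k^j-(T^1)_k^j)$ to see that the set where $F$ is locally constant is open, obtaining $H_1=\overline H_1$ on all of $\Sigma$. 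In the embedded hypersurface case $T^1=H_1I-A$ then forces $A=\frac{\overline H_1}ng$, so $\Sigma$ is totally umbilic and hence a geodesic sphere in $\mathbb R^m$, $\mathbb H^m$ or $\mathbb S^m_+$ by \cite{montiel1991compact}.

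\textbf{Case (3):} This was reduced above to \cite{cheng2012generalization} Theorem 1.2 (equivalently the $k=1$ case of Theorem \ref{thm: ge}): using \eqref{eq: E}, one has $\stackrel\circ{T^2}=\stackrel\circ{Ric}$ and $H_2=R$ up to an additive constant depending only on $c$ and $n$, so equality in \eqref{eq: main} is exactly equality in that theorem, whose rigidity statement says $\Sigma$ is Einstein; then $R$ is constant and so is $H_2$ by the Gauss equation, and in the embedded hypersurface case \cite[Theorems 4, 7 and 10]{montiel1991compact} identifies $\Sigma$ as a geodesic hypersphere.

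\textbf{Case (1):} This is handled by the work of Cheng--Zhou \cite{CZ}, cited above, which treats $r=1$ for immersed hypersurfaces in $\mathbb R^m$, $\mathbb H^m$, $\mathbb S^m_+$ and concludes $\Sigma$ is a distance sphere (in particular $H_1=\overline H_1$ is constant). The main obstacle in the whole statement is really the non-positive-Ricci propagation argument in case (2): because in higher codimension $\Sigma$ need not have a point of positive Ricci curvature, one genuinely needs the extra hypothesis and the Gauss-equation identity to run the open-closed continuation; everything else is either an immediate strict-inequality argument or a citation. I would therefore present the proof as four short paragraphs, one per case, each pointing to the relevant computation or reference already in the paper, and close by noting the uniform conclusion ``$H_r=\overline H_r$ is constant,'' with the geodesic-hypersphere conclusion following in cases (1)--(3) and under embeddedness from the cited rigidity results of \cite{montiel1991compact}.
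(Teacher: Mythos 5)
Your proposal matches the paper's own treatment: the theorem is explicitly stated as a summary of the four cases worked out earlier in Section \ref{sec: equality}, and your four paragraphs reproduce exactly those arguments (the strict-Bochner/constant-$F$ observation for case (4), the continuation argument via the Gauss equation for case (2), the reduction to \cite{cheng2012generalization} for case (3), and the citation of \cite{CZ} for case (1)). The only nit is that in case (4) the strictness enters in \eqref{eq: Bochner} rather than \eqref{ineq: lambda}, but your parenthetical makes clear you mean the Bochner term, so the argument is unaffected.
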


\section{Another form of almost-Schur type theorem}\label{sec: other}

In this section, we derive another form of Schur-type theorem, which gives a quantitative version of the following classical result of Schur: if $(M^n,g)$ $(n\ge 3)$ has sectional curvature which depends on its base point only, then its curvature is constant.

\subsection{Main result}

We first set up some notations. Let $\mathcal T^r(M)$ denote the space of covariant $r$-tensor on $M$ (e.g. $g\in \mathcal T^2(M)$).
The Kulkarni-Nomizu product $\odot: \mathcal T^2(M)\times \mathcal T^2(M)\to \mathcal T^4(M)$
is defined by (see e.g. \cite[p.47]{besse2008einstein})
\begin{equation*}
  \begin{split}
(\alpha\odot \beta)(X,Y,Z,W)
=& \alpha(X,Z)\beta(Y,W)+\alpha (Y,W)\beta(X,Z)\\
&-\alpha(X,W)\beta(Y,Z)-\alpha(Y,Z)\beta(X,W).
  \end{split}
\end{equation*}
We define $B= \frac 12 g\odot g$. It is easy to see that $B$ is the Riemann curvature tensor of a space form with curvature $1$ (we use the convention that $R_{ijij}$ is the sectional curvature). In local coordinates, it is given by
\begin{equation*}\label{eq: B}
B_{ijkl}=g_{ik}g_{jl}-g_{il}g_{jk}.
\end{equation*}

\begin{theorem}\label{thm: R}
Suppose $(M^n,g)$ ($n\ge 3$) is a closed oriented Riemannian manifold such that its Ricci curvature is bounded from below by $-(n-1)K$, $K\ge 0$, then we have
\begin{equation}\label{ineq: R}
\begin{split}
\int _M (R-\overline R) ^2&\stackrel{\mathrm{(i)}}\le \frac {4n(n-1) } {(n-2)^2}(1+\frac{nK}\lambda) \int_M |Ric-\frac Rn g|^2 \\
&\stackrel{\mathrm{(ii)}}\le \frac {n(n-1) } {n-2} (1+\frac{nK}\lambda)\int_M |Rm-\frac {R }{n(n-1)} B|^2,
\end{split}
\end{equation}
where $\overline R$ is the average of its scalar curvature $R$ and $\lambda$ is the first eigenvalue for the Laplacian on $M$. The equality sign in (i) holds if and only if $(M,g)$ is Einstein. If $n\ge 4$, then the equality sign in (ii) holds if and only if $(M,g)$ is locally conformally flat. Both (i) and (ii) become equalities if and only if $(M,g)$ has constant curvature.
\end{theorem}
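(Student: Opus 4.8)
The plan is to prove (i) by the De Lellis--Topping method \cite{lellis2012almost}, run exactly as in the proof of Theorem \ref{thm: main} with the Einstein tensor $G=Ric-\frac R2 g$ in the role of the Newton transformation $T^r$: $\mathrm{div}\,G=0$ by the twice-contracted Bianchi identity, so $\mathrm{div}(Ric-\frac Rn g)=\frac{n-2}{2n}\nabla R$. Solving $\Delta f=R-\overline R$ with $\int_M f=0$, integrating by parts, and using that $Ric-\frac Rn g$ is trace free,
\[
\int_M|\Delta f|^2=-\int_M\langle\nabla R,\nabla f\rangle=\frac{2n}{n-2}\int_M\Big\langle Ric-\tfrac Rn g,\ \nabla^2 f-\tfrac{\Delta f}{n}g\Big\rangle\le\frac{2n}{n-2}\,\big\|Ric-\tfrac Rn g\big\|_{L^2}\big\|\nabla^2 f-\tfrac{\Delta f}{n}g\big\|_{L^2}.
\]
Then, exactly as in \eqref{eq: norm}--\eqref{eq: Ric}, the Bochner formula, the hypothesis $Ric\ge-(n-1)Kg$, and $\int_M|\nabla f|^2\le\frac1\lambda\int_M|\Delta f|^2$ give $\big\|\nabla^2 f-\frac{\Delta f}{n}g\big\|_{L^2}^2\le\frac{n-1}{n}\big(1+\frac{nK}\lambda\big)\int_M|\Delta f|^2$; substituting and squaring yields (i). (Equivalently, (i) is the $k=1$ case of Theorem \ref{thm: ge}, and, via Proposition \ref{prop: equiv} and the Nash embedding theorem, the $r=2$, $N=\mathbb R^m$ case of Theorem \ref{thm: main}; for $K=0$ it recovers Theorem \ref{thm: 1}.)

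For (ii) I would cancel the common positive factor $\frac{n(n-1)}{n-2}\big(1+\frac{nK}\lambda\big)$ and reduce the claim to the \emph{pointwise} inequality $\frac{4}{n-2}\big|Ric-\frac Rn g\big|^2\le\big|Rm-\frac R{n(n-1)}B\big|^2$. This is read off the orthogonal irreducible decomposition of the curvature tensor: through the Schouten tensor and the Kulkarni--Nomizu product one has
\[
Rm=W+\frac{1}{n-2}\Big(Ric-\tfrac Rn g\Big)\odot g+\frac{R}{n(n-1)}B,\qquad\text{hence}\qquad Rm-\frac{R}{n(n-1)}B=W+\frac{1}{n-2}\Big(Ric-\tfrac Rn g\Big)\odot g,
\]
$W$ being the Weyl tensor. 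Since these two summands are pointwise orthogonal and $|h\odot g|^2=4(n-2)|h|^2$ for every trace-free symmetric $2$-tensor $h$, we obtain $\big|Rm-\frac{R}{n(n-1)}B\big|^2=|W|^2+\frac{4}{n-2}\big|Ric-\frac Rn g\big|^2\ge\frac{4}{n-2}\big|Ric-\frac Rn g\big|^2$; integrating and combining with (i) gives the chain \eqref{ineq: R}.

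It remains to treat the equality cases. The ``if'' directions are immediate: an Einstein metric has $Ric-\frac Rn g=0$ and, by Schur, $R\equiv\overline R$, so (i) is an equality; a constant-curvature metric additionally has $Rm-\frac R{n(n-1)}B=0$, so (i) and (ii) are both equalities. Equality in (ii) holds iff $W\equiv0$; for $n\ge4$ this is, by the Weyl--Schouten theorem, equivalent to $(M,g)$ being locally conformally flat, while for $n=3$ the Weyl tensor vanishes identically and (ii) is automatic. Equality in (i) is treated as the $r=2$ equality case of Theorem \ref{thm: main} discussed in Section \ref{sec: equality}: tracing the proof, equality forces $Ric-\frac Rn g$ to be a constant multiple of $\nabla^2 f-\frac{\Delta f}{n}g$, the Ricci bound to be saturated along $\nabla f$, and either $\nabla f\equiv0$ (hence $R\equiv\overline R$ and, since the right side of (i) must then also vanish, $(M,g)$ Einstein) or $\Delta f=-\lambda f$; in the latter case, as in the rigidity of \cite{cheng2012generalization}, one again concludes $(M,g)$ is Einstein. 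Combining: if both (i) and (ii) are equalities, then $(M,g)$ is Einstein and, for $n\ge4$, locally conformally flat, hence of constant curvature, while for $n=3$ being Einstein already means constant curvature. The step I expect to be most delicate is this equality rigidity for (i) when $K>0$ --- pinning down the interplay of $\Delta f=-\lambda f$, the Hessian proportionality, and the saturated Ricci lower bound; the $r=2$ equality discussion (which reduces to a known rigidity result) and the original De Lellis--Topping argument are the models to follow, while the rest of the proof is a routine adaptation of computations already present in the paper.
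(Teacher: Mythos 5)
Your proposal is correct and follows essentially the same route as the paper: inequality (i) via the De Lellis--Topping potential-function argument (which the paper outsources to \cite{cheng2012generalization}), inequality (ii) via the orthogonal decomposition $Rm=\frac{R}{n(n-1)}B+\frac{1}{n-2}\,g\odot\stackrel{\circ}{Ric}+W$ together with $|g\odot\stackrel{\circ}{Ric}|^2=4(n-2)|\stackrel{\circ}{Ric}|^2$, and the equality analysis via $W\equiv 0$ and the rigidity statement of \cite{cheng2012generalization}. The only cosmetic difference is that you obtain the pointwise bound $\frac{4}{n-2}|\stackrel{\circ}{Ric}|^2\le|Rm-\frac{R}{n(n-1)}B|^2$ from the Pythagorean identity rather than from the Cauchy--Schwarz step \eqref{ineq: pf}, which if anything makes the equality case of (ii) slightly more transparent.
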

\begin{proof}
 The Riemannian curvature tensor has the following orthogonal decomposition (e.g. \cite[p.26]{chow2006hamilton}):
  \begin{equation}\label{eq: decomp}
  Rm=\frac R{2n(n-1)}g\odot g+ \frac 1{n-2} g\odot \stackrel\circ{Ric} +W
  \end{equation}
  where $W$ is the Weyl tensor (which vanishes when $n=3$). As the decomposition is orthogonal, we have
  $$ \frac 1{n-2}|g\odot \stackrel\circ {Ric}|^2 = \langle Rm- \frac R{2n(n-1)}g\odot g, g\odot \stackrel\circ {Ric}\rangle.$$
  It is easy to compute that $ |g\odot \stackrel\circ {Ric}|^2 =4(n-2)|\stackrel\circ {Ric}|^2$. Thus
  \begin{equation}\label{ineq: pf}
  \begin{split}
    |\stackrel\circ {Ric}|^2
    &=\frac 14 \langle Rm-\frac R{n(n-1)}B, g\odot \stackrel\circ{Ric}\rangle\\
    &\le \frac 14 |Rm-\frac R{n(n-1)}B||g\odot  \stackrel\circ{Ric}|\\
    &=  \frac{\sqrt{n-2}}2 |Rm-\frac R{n(n-1)}B ||\stackrel\circ{Ric}|.
  \end{split}
\end{equation}
We conclude that
$$ |\stackrel\circ {Ric}|^2\le \frac{n-2}4 |Rm-\frac R{n(n-1)}B|^2.$$
On the other hand, from \cite{cheng2012generalization}, we have
\begin{equation*}
\int _M (R-\overline R) ^2 \le \frac {4n(n-1) } {(n-2)^2}(1+\frac{nK}\lambda) \int_M |\stackrel\circ {Ric}|^2.
\end{equation*}
Combining these two inequalities, we can get the result.
If the inequality in (i) becomes an equality, then by \cite{cheng2012generalization}, $(M,g)$ is Einstein. The equality in (i) clearly holds if $(M,g)$ is Einstein.

Now suppose the equality in (ii) holds. Then from \eqref{ineq: pf}, we deduce that
$ Rm - \frac {R}{2n(n-1)}g\odot g=g\odot \stackrel\circ{Ric}+W$ and $g\odot \stackrel\circ {Ric}$ must be linearly dependent. We deduce that $W=0$ and thus $(M,g)$ must be locally conformally flat if $n\ge 4$ (see e.g. \cite{chow2006hamilton} Proposition 1.62). If $(M,g)$ is locally conformally flat, then from \eqref{ineq: pf} we can also deduce that (ii) is an equality.

Finally if both (i) and (ii) become equalities, then $R=\overline R$ is constant by \cite{cheng2012generalization} and thus $Rm$ is also constant by \eqref{ineq: R}. The converse is clear.
\end{proof}

\begin{corollary}\label{cor: 1}
  With the same assumptions as in Theorem \ref{thm: R}, we have
  \begin{equation}\label{ineq: cor}
   \int_M |Rm-\frac{\overline R}{n(n-1)}B|^2 \le \frac{n}{n-2}(1+\frac{2K}{\lambda})\int_M |Rm-\frac{R}{n(n-1)}B|^2.
  \end{equation}
  The equality holds if and only if $(M,g)$ has constant curvature $\frac{\overline R}{n(n-1)}$.
\end{corollary}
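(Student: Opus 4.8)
The plan is to deduce \eqref{ineq: cor} from Theorem \ref{thm: R} by a Pythagorean splitting of the integrand. Write
$$ Rm-\frac{\overline R}{n(n-1)}B=\left(Rm-\frac R{n(n-1)}B\right)+\frac{R-\overline R}{n(n-1)}B. $$
The key point is that these two summands are pointwise orthogonal. Indeed, from the orthogonal decomposition \eqref{eq: decomp} and $B=\tfrac12 g\odot g$ one has $Rm-\frac R{n(n-1)}B=\frac 1{n-2}g\odot\stackrel\circ{Ric}+W$, and both $g\odot\stackrel\circ{Ric}$ and $W$ are orthogonal to $g\odot g$, hence to $B$, because $\stackrel\circ{Ric}$ is trace-free and $W$ is totally trace-free. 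Consequently
$$ \left|Rm-\frac{\overline R}{n(n-1)}B\right|^2=\left|Rm-\frac R{n(n-1)}B\right|^2+\frac{(R-\overline R)^2}{n^2(n-1)^2}\,|B|^2. $$

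Next I would record that a direct contraction of $B_{ijkl}=g_{ik}g_{jl}-g_{il}g_{jk}$ gives $|B|^2=2n(n-1)$, so the last term above equals $\frac{2(R-\overline R)^2}{n(n-1)}$. Integrating over $M$ and inserting the bound $\int_M(R-\overline R)^2\le \frac{n(n-1)}{n-2}\left(1+\frac{nK}\lambda\right)\int_M\left|Rm-\frac R{n(n-1)}B\right|^2$ furnished by Theorem \ref{thm: R} (the composition of its inequalities (i) and (ii)) yields
$$ \int_M\left|Rm-\frac{\overline R}{n(n-1)}B\right|^2\le\left(1+\frac2{n-2}\Big(1+\frac{nK}\lambda\Big)\right)\int_M\left|Rm-\frac R{n(n-1)}B\right|^2, $$
and the elementary identity $1+\frac2{n-2}\big(1+\frac{nK}\lambda\big)=\frac n{n-2}\big(1+\frac{2K}\lambda\big)$ gives exactly \eqref{ineq: cor}.

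For the equality statement, the Pythagoras identity above is an exact equality, so \eqref{ineq: cor} is an equality precisely when the bound $\int_M(R-\overline R)^2\le\frac{n(n-1)}{n-2}\big(1+\frac{nK}\lambda\big)\int_M\left|Rm-\frac R{n(n-1)}B\right|^2$ from Theorem \ref{thm: R} is an equality; since this is the composition of inequalities (i) and (ii) there, it forces both to be equalities, whence $(M,g)$ has constant curvature. Then $R=\overline R$ identifies that constant as $\frac{\overline R}{n(n-1)}$, and conversely if $(M,g)$ has constant curvature $\frac{\overline R}{n(n-1)}$ then $Rm=\frac{\overline R}{n(n-1)}B$ and both sides of \eqref{ineq: cor} vanish. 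I do not expect any real obstacle here; the only points requiring care are the pointwise orthogonality of the three pieces in \eqref{eq: decomp} and the bookkeeping of the equality case through the chain (i)$\Rightarrow$(ii).
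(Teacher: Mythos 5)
Your proposal is correct and follows essentially the same route as the paper: the same Pythagorean splitting of $Rm-\frac{\overline R}{n(n-1)}B$ using the orthogonality $\langle Rm-\frac{R}{n(n-1)}B,B\rangle=0$ coming from \eqref{eq: decomp}, followed by the composed inequality of Theorem \ref{thm: R}. Your explicit computation of $|B|^2=2n(n-1)$ and your tracing of the equality case through (i) and (ii) are just slightly more detailed versions of what the paper does.
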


\begin{proof}
As $Rm$ has the orthogonal decomposition \eqref{eq: decomp}, we have
$$ \langle Rm- \frac {R}{n(n-1)}B, B\rangle=0.$$
Since
$$Rm- \frac {\overline R}{n(n-1)}B=(Rm- \frac { R}{n(n-1)}B)+\frac {1}{n(n-1)}(R-\overline R)B, $$
by Pythagoras theorem we have
$$ |Rm- \frac {\overline R}{n(n-1)}B|^2=|Rm- \frac { R}{n(n-1)}B|^2+\frac 2{n(n-1)}(R-\overline R)^2.$$
Combining this with \eqref{ineq: R}, we can get the result.
\end{proof}

In a forthcoming paper \cite{kwong2012almost}, we will show that indeed an analogous result similar to Theorem \ref{thm: R} holds for the so called $(p,q)$-curvature (cf. \cite{labbi2005double}) on a Riemannian manifold $(M,g)$.

\subsection{Optimality of the constants}
We remark that the constants in \eqref{ineq: R} and \eqref{ineq: cor} are optimal, which follows directly from the argument of De Lellis and Topping \cite{lellis2012almost}.

 Let $M=(\mathbb S^n , g_0)$ be the sphere with standard metric. In this case, $Ric = (n-1)g_0$ and thus we can choose $K$ in Theorem \ref{thm: R} to be zero. Then \eqref{ineq: R} (i) becomes
 $$ \int _M (R-\overline R) ^2\le C_1\int_M |Ric-\frac Rn g|^2 $$
 where $C_1=\frac {4n(n-1) } {(n-2)^2}$. Using a second variation argument, De Lellis and Topping \cite{lellis2012almost} showed that the constant $C_1$ is optimal in the following sense: for any $0<\alpha<1$, there exists a sufficiently small $t>0$ and a function $f$ on $M$ so that $g_t=(1+tf )g_0$ is a Riemannian metric with
 $$ \int _{M} (R(g_t)-\overline R(g_t)) ^2 dv_{g_t}> \alpha C_1 \int_{M} |Ric(g_t)-\frac {R(g_t)}n g_t|^2 dv_{g_t}.$$
Note that $g_t$ is conformal with $g_0$ and in particular is conformally flat. From this we see that the Weyl tensor $W(g_t)$ vanishes and in particular \eqref{ineq: pf} implies that
$$\alpha C_1 \int_{M} |Ric(g_t)-\frac {R(g_t)}n g_t|^2 dv_{g_t}= \alpha C_2 \int_M |Rm(g_t)-\frac {R(g_t) }{n(n-1)} B|^2dv_{g_t},$$
where $C_2= \frac {n(n-1) } {n-2} $. From this we see that the constants in \eqref{ineq: R} are optimal.

\bibliographystyle{amsplain}

\end{document}